\newtheorem{theorem}{Theorem}[section]
\newtheorem{lemma}[theorem]{Lemma}
\newtheorem{proposition}[theorem]{Proposition}
\newtheorem{example}[theorem]{Example}
\newtheorem{corollary}[theorem]{Corollary}
\newtheorem{remark}[theorem]{Remark}
\newtheorem*{questions*}{Questions}
\def\bh{\mathcal{B(H)}}
\newcommand\Ref{\mathop{\rm Ref}}
\def\gd{\delta}
\def\gs{\sigma}
\def\ga{\alpha}
\def\gl{\lambda}
\def\ot{\otimes}
\newcommand{\prend}{ $\quad\Box$\hfill \bigskip}
\newcommand\Alg{\mathop{\rm Alg}}
\newcommand\Lat{\mathop{\rm Lat}}
\newcommand\nph{\varphi}
\newcommand{\bb}[1]{\mathbb{#1}}
\newcommand{\cl}[1]{\mathcal{#1}}
\newcommand{\sca}[1]{\left\langle#1\right\rangle}
\newcommand{\nor}[1]{\left\Vert #1\right\Vert}
\newcommand{\mat}[3]{\left[\begin{matrix}
1 & #1 & #3 \\
0 & 1 & #2 \\
0 & 0 & 1
\end{matrix}\right]}
\title{Operator algebras from the discrete Heisenberg semigroup}
\author{M.Anoussis, A.Katavolos \& I.G.Todorov}
\begin{document}

\subjclass[2000]{Primary  47L75; Secondary   43A65, 47L99}

\begin{abstract}
We study reflexivity and structure properties of operator algebras
generated by representations of the discrete Heisenberg semi-group.
We show that the left regular representation of this semi-group
gives rise to a semi-simple reflexive algebra. We exhibit an example
of a representation which gives rise to a non-reflexive algebra. En
route, we establish reflexivity results for subspaces of
$H^{\infty}(\bb{T})\otimes\cl B(\cl H)$.
\end{abstract}

\maketitle

\section{Introduction}

The  theory of group
representations has been a motivating force for Operator Algebra
Theory since the very beginnings of the subject. If $\pi$ is a
unitary representation of a group $G$, a much studied object is the
weak-* closed algebra generated by $\{\pi(g):g\in G\}$. A special
case of particular importance arises when $\pi$ is the left regular
representation $g\rightarrow L_g$ acting on $L^2(G)$; the algebra
obtained in this way is the von Neumann algebra $\mathop{\rm VN}(G)$
of the group $G$.

These algebras are all selfadjoint. If $S\subseteq G$ is a
semigroup, one can consider instead the non-selfadjoint algebra
generated by $\{\pi(g):g\in S\}$, possibly restricted to a common
invariant subspace. The algebra of analytic Toeplitz operators is an
instance of this construction. Such algebras have recently attracted
considerable attention in the literature.

Let $\bb{F}_n^+$ be the free semigroup on $n$ generators. The
``non-commutative Toeplitz algebra'' is the weakly closed
algebra generated by the operators $L_g$, $g\in \bb{F}_n^+$,
restricted to the invariant subspace $\ell^2(\bb{F}_n^+)$. It was introduced by Popescu in
\cite{pope} and studied by him in a series of papers and by Arias--Popescu in \cite{ap}.
Later, Davidson--Pitts \cite{davpit2, davpit} and Davidson--Katsoulis--Pitts
 \cite{dkp}, considered this algebra within the more general framework of Free Semigroup Algebras.
 On the other hand,
non-selfadjoint algebras arising from representations of some Lie
groups such as the Heisenberg group, the ``$ax+b$ group'' and
$SL_2(\bb R)$ were considered by Katavolos--Power \cite{hyp,fb}, by
Levene \cite{lev} and by Levene--Power \cite{levpow}. These authors
studied questions including reflexivity and hyperreflexivity,
determination of the invariant subspace lattice and semisimplicity.

In this paper, we study operator algebras arising from
representations of the discrete Heisenberg semigroup. Recall that
the discrete Heisenberg group $\bb H$ consists of all matrices of
the form
$$ \left[ \begin{matrix}
1 & k & n \\
0 & 1 & m \\
0 & 0 & 1
\end{matrix} \right] \qquad k, m, n \in \bb Z.$$
Let $\bb{H}^+$ be the semigroup consisting of all matrices in
$\bb{H}$ with $k,m\in \bb{Z}^+$. We are interested in the weak-*
closed algebra $\cl T_L(\bb{H}^+)$ generated by the operators $L_g$,
$g\in \bb{H}^+$, restricted to the invariant subspace
$\ell^2(\bb{H}^+)$. In Section 4, we show that $\cl T_L(\bb{H}^+)$
contains no non-trivial quasinilpotent or compact elements; in
particular, it is semisimple. We show that the commutant of $\cl T_L(\bb{H}^+)$ is
the corresponding right regular representation and we identify the centre and  the
diagonal. In Section 5 we
prove that $\cl T_L(\bb{H}^+)$ is reflexive using a direct integral
decomposition and the results of Section 3.

In Section 6 we study a class of representations of
$\bb{H}^+$ which 
arise from representations of the irrational rotation algebra
studied by Brenken \cite{br}. The latter, in the multiplicity free
case, are parametrised by a cocycle and a measure. When the cocycle
is trivial, we show that the weak-* closed algebras generated by the
restriction 
to $\bb{H}^+$ are unitarily equivalent to nest algebras or equal to
$\bh$. We also exhibit a representation (corresponding to a
non-trivial cocycle) which generates a non-reflexive algebra even
for the weak operator topology.

In Sections 2 and 3 of the paper we develop a technique that allows
us to handle the question of reflexivity of $\cl T_L(\bb{H}^+)$. We
introduce and study a notion of reflexivity for spaces of operators
acting on tensor products of Hilbert spaces, which we think is of
independent interest. Using this notion, we generalise previous
results of Kraus \cite{kraus} and Ptak \cite{ptak}, establishing
reflexivity for a class of subspaces of $\cl T\otimes\mathcal \bh$
(where $\cl T$ is the algebra of analytic Toeplitz operators). 

\bigskip

\noindent\textbf{Preliminaries and Notation }
The discrete Heisenberg group $\bb{H}$ is 
generated by $$u=\mat{1}{0}{0},\qquad v=\mat{0}{1}{0}\qquad
\text{and }\qquad w=\mat{0}{0}{1}.$$ The element $w$ is central and $uv=wvu$.

We write  $\cl B(\cl H)$ for the algebra
of all bounded linear operators on a Hilbert space $\cl H$.
If $P\in\cl B(\cl H)$ is an (orthogonal) projection, we set $P^{\perp} = I - P$,
where $I$ is the identity operator. We denote by
$\cl B(\cl H)_*$ the predual of $\cl B(\cl H)$, that is, the space
of all weak-* continuous functionals on $\cl B(\cl H)$. If $x,y\in
\cl H$, we write $\omega_{x,y}$ for the vector functional in $\cl
B(\cl H)_*$ given by $\omega_{x,y}(A) = \sca{Ax,y}$, $A\in \cl B(\cl
H)$. If $\cl E$ is a subset of a vector space, $[\cl E]$ will stand for the 
linear span of  $\cl E$. 

The \emph{preannahilator} $\cl S_{\perp}$ of a subspace $\cl S\subseteq \cl B(\cl H)$ is
$$\cl S_{\perp} = \{\omega\in \cl B(\cl H)_* : \omega(A) = 0, \mbox{ for all } \ A\in \cl S\}.$$
The \emph{reflexive hull} of $\cl S$ \cite{ls} is 
$$\Ref\cl S = \{A\in \cl B(\cl H) : \omega_{x,y}(\cl S) = \{0\} \Rightarrow \omega_{x,y}(A) = 0,
\mbox{ for all } x,y\in \cl H\}$$
The subspace $\cl S$ is called \emph{reflexive} if
$\cl S = \Ref\cl S$.

If $\cl L$ is a collection of projections on $\cl H$,
$$\Alg \cl L = \{A\in \cl B(\cl H) : AL = LAL\}$$ is the
algebra of all operators leaving the ranges of the elements of $\cl L$
invariant. It is easy to see that a unital subalgebra $\cl A\subseteq \cl B(\cl H)$ is reflexive
if and only if $\cl A = \Alg\cl L$ for some collection $\cl L$ of projections
on $\cl H$.

Let $\cl H_1$ and $\cl H_2$ be Hilbert spaces and let  $\cl H_1\otimes\cl H_2$ 
be their Hilbert space tensor product. If $\cl S_i\subseteq \cl B(\cl H_i)$, $i = 1,2$,
we let $\cl S_1\otimes\cl S_2$ be the weak-* closed subspace of $\cl B(\cl H_1\otimes\cl H_2)$
generated by the operators $A_1\otimes A_2$, where $A_i \in \cl S_i$, $i = 1,2$.
If $A\in \cl B(\cl H_1)$, we write $A\otimes \cl S_2$ for the space $\bb{C}A \otimes \cl S_2$.
If $\omega_i\in \cl B(\cl H_i)_*$, $i = 1,2$, we let $\omega_1\otimes\omega_2\in \cl B(\cl H_1\otimes\cl H_2)_*$
be the unique weak-* continuous functional satisfying
$(\omega_1\otimes\omega_2)(A_1\otimes A_2) = \omega_1(A_1)\omega_2(A_2)$, $A_i\in \cl B(\cl H_i)$,
$i = 1,2$.

Finally, we let $H^p$ be the Hardy space
corresponding to $p$ ($p = 2,\infty$), that is, the space
consisting by all functions in $L^p(\bb{T})$
whose Fourier coefficients indexed by negative integers vanish.
For each $\nph\in H^{\infty}$, we let
$T_{\nph}\in \cl B(H^2)$ be the analytic Toeplitz operator with symbol $\nph$,
that is, the operator given by
$T_{\nph}f = \nph f$, $f\in H^2$. We let
$$\cl T =\{T_{\nph} : \nph\in H^{\infty}\}$$ be the algebra of all
analytic Toeplitz operators on $H^2$.

\section{A reflexive hull for subspaces of $\cl B(\cl H_1\otimes\cl H_2)$}\label{s_rhs}

In this section, we introduce a reflexive hull for
spaces of operators that act on the tensor product of two given Hilbert spaces.
The results will be applied in Section 3 to
study reflexivity of subspaces of $\cl T\ot \cl B(\cl K)$ for a given Hilbert space $\cl K$.

Suppose a Hilbert space $\cl H$ decomposes as a tensor product $\cl H_1\ot\cl H_2$ of two Hilbert spaces.
If $\omega\in \cl B(\cl H_1)_*$ then the
right slice map $R_{\omega} : \cl B(\cl H_1\otimes\cl
H_2)\rightarrow \cl B(\cl H_2)$ is the unique
weak-* continuous linear map with the property that
$R_{\omega}(A\otimes B) = \omega(A)B$, whenever $A\in \cl B(\cl H_1)$
and $B\in \cl B(\cl H_2)$. Similarly one defines the left slice maps, denoted by $L_{\tau}$,
where $\tau\in \cl B(\cl H_2)_*$.
We note that if $\omega = \omega_{\xi,\eta}$ for some vectors
$\xi,\eta\in \cl H_1$ then for all $x,y\in\cl H_2$,
\begin{equation}\label{red}
\sca{R_{\omega}(T)x,y} = \sca{T(\xi\otimes x),\eta\otimes y}, \ \ \ T\in \cl B(\cl H_1\otimes\cl H_2).
\end{equation}
This equality shows that, when $\omega$ is a vector functional
or, more generally, a weakly continuous functional,
then $R_\omega$ is also weakly (that is, WOT-WOT) continuous.

If $\cl S$ is a weak-* closed subspace of $\cl B(\cl H_1)$ and $T\in\cl S\ot\cl{B(H}_2)$,
then clearly $L_\omega(T)\in\cl S$ for all $\omega\in \cl{B(H}_2)_*$. The converse was proved in \cite{kraus}:

\begin{lemma}[Kraus]\label{slice}
Let $\cl S$ be a weak-* closed subspace of $\cl B(\cl H_1)$ and $T\in\cl{B(H}_1\ot\cl H_2)$.
If $L_\omega(T)\in\cl S$ for all $\omega\in \cl{B(H}_2)_*$ then $T\in\cl S\ot\cl{B(H}_2)$.
\end{lemma}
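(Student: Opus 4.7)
The plan is to reduce to the case where $\cl H_2$ is finite-dimensional, handle that case by a direct matrix calculation, and then pass to the general case via a weak-$*$ limit of finite-rank compressions.

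Suppose first that $\dim\cl H_2 = n < \infty$ with orthonormal basis $(e_i)_{i=1}^n$, and let $e_{ij}\in\cl B(\cl H_2)$ denote the corresponding matrix units. Every $X\in\cl B(\cl H_1\ot\cl H_2)$ has a unique decomposition $X = \sum_{i,j=1}^n X_{ij}\ot e_{ij}$ with $X_{ij}\in\cl B(\cl H_1)$. A short computation analogous to (\ref{red}) shows that for the vector functional $\tau_{ij} := \omega_{e_j,e_i}\in\cl B(\cl H_2)_*$ one has $L_{\tau_{ij}}(X) = X_{ij}$. Thus the slice hypothesis forces $T_{ij}\in\cl S$ for every $i,j$, and so $T = \sum_{i,j}T_{ij}\ot e_{ij}$, being a finite sum of elementary tensors with left factors in $\cl S$, belongs to $\cl S\ot\cl B(\cl H_2)$.

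For general $\cl H_2$, fix an increasing net $(p_\alpha)$ of finite-rank projections on $\cl H_2$ with $p_\alpha\to I$ strongly, and set $T_\alpha := (I\ot p_\alpha)T(I\ot p_\alpha)$, regarded as an operator on $\cl H_1\ot p_\alpha\cl H_2$. The hypothesis transfers to $T_\alpha$: given $\tau\in(p_\alpha\cl B(\cl H_2)p_\alpha)_*$ with extension $\tilde\tau(B) := \tau(p_\alpha B p_\alpha)$ to $\cl B(\cl H_2)_*$, the identity $L_\tau(T_\alpha) = L_{\tilde\tau}(T)$ holds on elementary tensors and extends by weak-$*$ continuity of $L_\tau$, $L_{\tilde\tau}$ and of the compression map. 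Since the right-hand side lies in $\cl S$, so does the left, and the finite-dimensional case gives $T_\alpha\in\cl S\ot p_\alpha\cl B(\cl H_2)p_\alpha\subseteq\cl S\ot\cl B(\cl H_2)$. Since $p_\alpha\to I$ strongly and $\|T_\alpha\|\le\|T\|$, a routine two-term estimate shows $T_\alpha\to T$ in the weak operator topology, and hence in the weak-$*$ topology; as $\cl S\ot\cl B(\cl H_2)$ is weak-$*$ closed by definition, we conclude $T\in\cl S\ot\cl B(\cl H_2)$.

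The main obstacle I expect is the bookkeeping in the reduction step—the identity $L_\tau(T_\alpha) = L_{\tilde\tau}(T)$ and the natural identification $(I\ot p_\alpha)\cl B(\cl H_1\ot\cl H_2)(I\ot p_\alpha)\cong\cl B(\cl H_1\ot p_\alpha\cl H_2)$. Neither is conceptually difficult, but they are the points where care is needed to keep the argument clean.
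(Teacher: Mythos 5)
The paper offers no proof of this lemma at all: it is quoted from Kraus \cite{kraus}, so there is no internal argument to compare against. Your proof is correct and is essentially the standard argument for this fact. The finite-dimensional step is right (with the harmless slip that you write $X_{ij}$ in the decomposition and then $T_{ij}$): the slices $L_{\omega_{e_j,e_i}}$ recover the matrix entries, and $\cl S\ot M_n$ is exactly the matrices with entries in $\cl S$ because $\cl S$ is weak-* closed. In the reduction step, the two points you flag both check out: the transferred functional $\tilde\tau$ is normal (for the vector functionals you actually use it is just $\omega_{e_j,e_i}$ viewed on $\cl B(\cl H_2)$, since $e_i,e_j\in p_\alpha\cl H_2$), and since $\|T_\alpha\|\le\|T\|$ the net stays in a bounded set, on which the weak operator and weak-* topologies coincide, so WOT convergence of $T_\alpha$ to $T$ does give weak-* convergence. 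It is worth noting explicitly that the argument depends on the second tensor factor being all of $\cl B(\cl H_2)$, so that the compressions $p_\alpha B p_\alpha$ remain inside it; for a general weak-* closed second factor the analogous statement is the slice map problem, which can fail. Your proof thus makes the paper's cited black box self-contained, at the cost of about a page.
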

Consider the  set of vector functionals
$$\cl E = \{\omega_{\xi\otimes x,\eta\otimes y} :
\xi,\eta\in \cl H_1, x,y\in \cl H_2\}\subseteq \cl B(\cl H_1\otimes\cl H_2)_*.$$
The set $\cl E$ (as any subset of the dual of $\cl B(\cl H_1\otimes\cl H_2)$, see
\cite{hadwin}) can be used to define
a reflexive hull for subspaces of $\cl B(\cl H_1\otimes\cl H_2)$. Namely,
if $\cl S\subseteq\cl B(\cl H_1\otimes\cl H_2)$ let
$$\Ref\mbox{}_e\cl S = \{T\in \cl B(\cl H_1\otimes\cl H_2) : \omega(\cl S) = \{0\} \Rightarrow \omega(T) = 0,
\ \ \forall \ \omega\in \cl E\}.$$

It is clear that $\Ref_e(\cl S)$ depends on the tensor product decomposition $\cl{H=H}_1\otimes\cl H_2$.
The following statements are easy consequences of the definition; we omit their proofs.

\begin{lemma}\label{easin}
Let $\cl S\subseteq\cl B(\cl H_1\otimes\cl H_2)$. Then

(i) \ \ $\Ref_e\cl S$ is a reflexive, hence weakly closed, subspace of operators;

(ii) \ $\Ref\cl S\subseteq\Ref_e\cl S$;

(iii) $\Ref_e\cl S = \Ref_e\Ref\cl S = \Ref_e\Ref_e\cl S$.
\end{lemma}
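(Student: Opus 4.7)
The plan is to unwind the definition in each of (i)--(iii); the one observation that does all the work is that every $\omega\in\cl E$ is itself a (weakly continuous) vector functional on $\cl B(\cl H_1\otimes\cl H_2)$, and hence if $\omega\in\cl E\cap\cl S_\perp$ then $\omega$ lies automatically in $(\Ref\cl S)_\perp$ and in $(\Ref_e\cl S)_\perp$ by the very definitions of those hulls.

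Item (ii) is then immediate, since $\cl E$ is a subfamily of the collection of all vector functionals: the condition selecting $\Ref\cl S$ is strictly more restrictive than that selecting $\Ref_e\cl S$. For (i), I would first rewrite
\[
\Ref_e\cl S \;=\;\bigcap\{\ker\omega:\omega\in\cl E,\ \omega(\cl S)=\{0\}\},
\]
which exhibits $\Ref_e\cl S$ as an intersection of kernels of weakly continuous functionals and is therefore weakly closed. To see $\Ref(\Ref_e\cl S)=\Ref_e\cl S$, only the inclusion $\subseteq$ needs work: given $T\in\Ref(\Ref_e\cl S)$ and $\omega\in\cl E\cap\cl S_\perp$, the key observation puts $\omega$ in $(\Ref_e\cl S)_\perp$; since $\omega$ is a vector functional, the defining property of the ordinary reflexive hull forces $\omega(T)=0$, so $T\in\Ref_e\cl S$.

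For (iii), I would prove the slightly stronger statement that $\Ref_e\cl X=\Ref_e\cl S$ whenever $\cl S\subseteq\cl X\subseteq\Ref_e\cl S$. The inclusion $\cl S\subseteq\cl X$ gives $\cl E\cap\cl X_\perp\subseteq\cl E\cap\cl S_\perp$ by monotonicity of the preannihilator, while the key observation supplies the reverse inclusion, because every $\omega\in\cl E\cap\cl S_\perp$ annihilates $\Ref_e\cl S$ and a fortiori $\cl X$. Setting $\cl X=\Ref\cl S$ (legitimate by (ii)) and then $\cl X=\Ref_e\cl S$ yields the two asserted equalities. No real obstacle arises in any of the three parts: the whole lemma is a bookkeeping exercise in the definitions, which is presumably why the authors chose to omit the proof.
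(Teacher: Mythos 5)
Your proof is correct. The paper omits the argument as an ``easy consequence of the definition,'' and your verification --- writing $\Ref_e\cl S$ as the intersection of the kernels of the weakly continuous functionals in $\cl E\cap\cl S_\perp$ and observing that each such functional automatically annihilates both $\Ref\cl S$ and $\Ref_e\cl S$ --- is exactly the routine bookkeeping the authors intended, carried out without gaps.
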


It follows from Lemma \ref{easin} that if a subspace $\cl
S\subseteq\cl B(\cl H_1\otimes\cl H_2)$ satisfies $\Ref_e\cl S=\cl
S$ then $\cl S$ is reflexive. Remark \ref{bigger} below shows that
the converse does not hold.

\begin{lemma}\label{fubini}
Let $\cl U\subseteq  \cl B(\cl H_1)$ and $\cl V\subseteq \cl B(\cl H_2)$ be subspaces. Then
\[
\Ref\mbox{}_e (\cl U\ot\cl V) = ( \cl B(\cl H_1)\ot\Ref\cl V)\cap ( \Ref\cl U\ot B(\cl H_2)).
\]
\end{lemma}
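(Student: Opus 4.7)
The plan is to derive both containments from the factorization
\[
\omega_{\xi \ot x, \eta \ot y}(A \ot B) = \omega_{\xi, \eta}(A)\, \omega_{x, y}(B),
\]
which via (\ref{red}) yields the bridge identity $\omega_{x, y}(R_{\omega_{\xi, \eta}}(T)) = \omega_{\xi \ot x, \eta \ot y}(T) = \omega_{\xi, \eta}(L_{\omega_{x, y}}(T))$ linking the functionals in $\cl E$ to slice maps. Combined with Kraus's Lemma \ref{slice} applied to the weak-$*$ closed subspaces $\Ref \cl U$ and $\Ref \cl V$, this machinery forces the desired equality.

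For the forward inclusion, fix $T \in \Ref_e(\cl U \ot \cl V)$; by symmetry it suffices to prove $T \in \cl B(\cl H_1) \ot \Ref \cl V$. The right-slice analogue of Lemma \ref{slice} reduces this to showing $R_\omega(T) \in \Ref \cl V$ for every $\omega \in \cl B(\cl H_1)_*$. If $\omega_{x, y}$ vanishes on $\cl V$, then the factorization identity shows that $\omega_{\xi \ot x, \eta \ot y}$ annihilates every elementary tensor, hence all of $\cl U \ot \cl V$ by weak-$*$ continuity, and so $\omega_{\xi \ot x, \eta \ot y}(T) = 0$ since $\omega_{\xi \ot x, \eta \ot y} \in \cl E$; the bridge identity gives $\omega_{x, y}(R_{\omega_{\xi, \eta}}(T)) = 0$, proving $R_{\omega_{\xi, \eta}}(T) \in \Ref \cl V$. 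The main obstacle I anticipate is the continuity bootstrap needed to upgrade this from vector functionals to arbitrary $\omega \in \cl B(\cl H_1)_*$, which legitimises the application of Lemma \ref{slice}; it is handled by the norm bound $\nor{R_\omega(T)} \leq \nor{\omega}\nor{T}$, norm density of the span of vector functionals in $\cl B(\cl H_1)_*$, and norm closedness of $\Ref \cl V$.

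For the reverse inclusion, take $T$ in the intersection on the right and pick $\omega_{\xi \ot x, \eta \ot y} \in \cl E$ with $\omega_{\xi \ot x, \eta \ot y}(\cl U \ot \cl V) = \{0\}$. The factorization identity forces $\omega_{\xi, \eta}(A)\, \omega_{x, y}(B) = 0$ for all $A \in \cl U$ and $B \in \cl V$, so at least one of $\omega_{\xi, \eta}|_{\cl U}$, $\omega_{x, y}|_{\cl V}$ is identically zero. In the former case, $T \in \Ref \cl U \ot \cl B(\cl H_2)$ implies $L_{\omega_{x, y}}(T) \in \Ref \cl U$ (immediate on elementary tensors, then extended by weak-$*$ continuity of $L_{\omega_{x, y}}$ and weak-$*$ closedness of $\Ref \cl U$), so $\omega_{\xi, \eta}(L_{\omega_{x, y}}(T)) = 0$ by the definition of $\Ref \cl U$; the latter case is symmetric, using $R_{\omega_{\xi, \eta}}(T) \in \Ref \cl V$. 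In either case the bridge identity produces $\omega_{\xi \ot x, \eta \ot y}(T) = 0$, placing $T$ in $\Ref_e(\cl U \ot \cl V)$.
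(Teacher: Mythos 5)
Your proof is correct and follows essentially the same route as the paper's: the factorization $\omega_{\xi\ot x,\eta\ot y}=\omega_{\xi,\eta}\ot\omega_{x,y}$, the dichotomy for annihilating functionals, the continuity bootstrap from vector functionals to all of $\cl B(\cl H_1)_*$, and Kraus's slice-map lemma are exactly the ingredients used there. The only (cosmetic) difference is in the reverse inclusion, where the paper first shows the relevant slices of $T$ vanish outright while you evaluate the functionals directly via the definition of $\Ref$; both are equivalent one-line variants.
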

\proof Note that a vector functional
$\omega_{\xi\otimes x, \eta\otimes y} = \omega_{\xi,\eta}\ot \omega_{x,y}$ annihilates $\cl U\ot\cl V$
if and only if either $\omega_{\xi,\eta}$ annihilates $\cl U$ or $\omega_{x,y}$ annihilates $\cl V$.
For if there exists $U\in \cl U$ with $\omega_{\xi,\eta}(U)\ne 0$, then for all $V\in\cl V$ we have
$\omega_{\xi,\eta}(U) \omega_{x,y}(V)=0$, and hence $\omega_{x,y}(V)=0$.

Now let $T\in \Ref\mbox{}_e (\cl U\ot\cl V)$. Suppose that
$\omega_{\xi,\eta}\in \cl U_{\perp}$. Then $\omega_{\xi\otimes
x,\eta\otimes y}$ annihilates $\cl U\ot\cl V$ for all $x,y\in \cl
H_2$, and hence
\[
\omega_{\xi,\eta}(L_{\omega_{x,y}}(T))=(\omega_{\xi,\eta}\ot
\omega_{x,y})(T) = 0.
\]
This shows that $L_{\omega_{x,y}}(T)\in\Ref\cl U$. Since $x,y\in\cl H_2$ are arbitrary,
linearity and (norm) continuity of the map $\omega\to L_\omega$ yields $L_\omega(T)\in\Ref\cl U$ for all
$\omega\in (\cl{B(H}_2))_*$. By Lemma \ref{slice}, $T\in \Ref\cl U\ot\cl{B(H}_2)$.
Similarly, one obtains $T\in\cl{B(H}_1)\ot \Ref\cl V$.

Conversely, if $T\in ( \cl B(\cl H_1)\ot\Ref\cl V)\cap ( \Ref\cl U\ot \cl B(\cl H_2))$, then for each
$\phi=\omega_{\xi,\eta}\, (\xi, \eta\in\cl{H}_1)$ we have $R_{\phi}(T)\in\Ref\cl V$.
So if $\omega_{x,y}$ is a vector functional
annihilating $\cl V$ then it must annihilate $ R_{\phi}(T)$, and hence
\[
\phi(L_{\omega_{x,y}}(T))=(\phi\ot\omega_{x,y})(T)=\omega_{x,y}(R_{\phi}(T))=0.
\]
Since $\phi=\omega_{\xi,\eta}$ with $\xi, \eta$ arbitrary in  $\cl{H}_1$, this implies $L_{\omega_{x,y}}(T)=0$.
Similarly, using the fact that all left slices of $T$ must lie in $\Ref\cl U$,
we see that
\[
\omega_{\xi,\eta}\in\cl U_\perp\;\;\Rightarrow\;\; R_{\omega_{\xi,\eta}}(T)=0.
\]
Therefore, if $\omega_{\xi,\eta}\ot \omega_{x,y}$ annihilates $\cl U\ot\cl V$ then
either $\omega_{\xi,\eta}$ annihilates $\cl U$, in which case $R_{\omega_{\xi,\eta}}(T)=0$,
or $\omega_{x,y}$ annihilates $\cl V$, in which case $L_{\omega_{x,y}}(T)=0$.
In either case,
\[
(\omega_{\xi,\eta}\ot \omega_{x,y})(T)=\omega_{\xi,\eta}(L_{\omega_{x,y}}(T))
=\omega_{x,y}(R_{\omega_{\xi,\eta}}(T))=0
\]
which shows that $T\in \Ref\mbox{}_e (\cl U\ot\cl V)$.
 \prend

\begin{remark}
{\rm The intersection $(\cl B(\cl H_1)\ot\cl V)\cap (\cl U\ot \cl
B(\cl H_2))$ coincides with the Fubini product $F(\cl U, \cl V)$
defined by Tomiyama in \cite{to2} for von Neumann algebras and by
Kraus in \cite{kraus} for weak-* closed spaces of operators.

Let $\cl L_1$ and $\cl L_2$ be subspace lattices on the Hilbert
spaces $\cl H_1,\cl H_2$ and $\cl L_1\ot\cl L_2$ be the smallest
subspace lattice generated by $P_1\ot P_2$, where $P_i\in\cl L_i$,
$i = 1,2$. It follows from a result  of Kraus \cite[Eq.
(3.3)]{kraus} that the Fubini product $ F(\Alg\cl L_1, \Alg\cl L_2)$
equals $\Alg(\cl L_1\ot\cl L_2)$. Combining this with Lemma
\ref{fubini} we obtain
$$\Ref\mbox{}_e (\Alg\cl L_1 \ot \Alg\cl L_2)=\Alg(\cl L_1\ot\cl L_2).$$}
\end{remark}

\begin{corollary}\label{new}
(a) If  $A\in\cl B(\cl H_1)$, then
$\Ref_e (A\otimes\cl V) = A\otimes \Ref\cl V$.

(b) If $\cl U \subseteq \cl B(\cl H_1)$ then
$\Ref_e (\cl U\ot\cl{B(H}_2)) = \Ref\cl U\ot\cl{B(H}_2)$.
\end{corollary}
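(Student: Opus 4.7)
The plan is to derive both parts directly from Lemma \ref{fubini}. Part (b) is a one-step specialization: set $\cl V = \cl B(\cl H_2)$. Since $\cl B(\cl H_2)$ is trivially reflexive, $\Ref \cl B(\cl H_2) = \cl B(\cl H_2)$, and the first factor in the Fubini intersection becomes $\cl B(\cl H_1\ot\cl H_2)$, so the intersection collapses to $\Ref\cl U\ot\cl B(\cl H_2)$, as desired.

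For part (a), set $\cl U = \bb C A$ in Lemma \ref{fubini} to obtain
\[
\Ref\mbox{}_e(A\ot\cl V) = (\cl B(\cl H_1)\ot\Ref\cl V)\cap(\Ref(\bb C A)\ot\cl B(\cl H_2)).
\]
The key auxiliary fact I would invoke is that $\Ref(\bb C A) = \bb C A$ for every nonzero $A$. This is an elementary Hilbert-space observation: any $B\in\Ref(\bb C A)$ satisfies $B\xi \in \bb C\, A\xi$ for every $\xi$ (with $B\xi = 0$ whenever $A\xi = 0$), so one writes $B\xi = \gl(\xi)A\xi$ pointwise; a short case analysis — using $\xi_1,\xi_2$ with $A\xi_1, A\xi_2$ linearly independent when $\rank A\ge 2$, or, in the rank-one case, using a vector in $\ker A$ to transport the scalar across images — forces $\gl(\xi)$ to be constant.

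Granted this, the intersection simplifies to $(\cl B(\cl H_1)\ot\Ref\cl V) \cap (\bb C A\ot\cl B(\cl H_2))$. Since $\bb C A$ is one-dimensional, every element of the second factor has the form $T = A\ot S$ for some $S\in \cl B(\cl H_2)$. Choosing $\xi_0,\eta_0 \in \cl H_1$ with $\omega_{\xi_0,\eta_0}(A) = 1$ (possible since $A\neq 0$) and right-slicing yields $S = R_{\omega_{\xi_0,\eta_0}}(T)$; membership in $\cl B(\cl H_1)\ot\Ref\cl V$ then forces $S\in \Ref\cl V$, and hence $T\in A\ot\Ref\cl V$. The reverse containment is immediate, and the trivial case $A = 0$ is clear. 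The principal — and essentially only — non-mechanical step is the auxiliary identity $\Ref(\bb C A) = \bb C A$; everything else is routine bookkeeping with Lemma \ref{fubini} and the slice maps.
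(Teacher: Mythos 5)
Your proof is correct and follows essentially the same route as the paper's: both parts are read off from Lemma \ref{fubini}, with part (a) hinging on the identity $\Ref(\bb C A)=\bb C A$. The only cosmetic difference is that the paper cites this fact (reflexivity of one-dimensional subspaces, \cite[56.5]{con}) rather than proving it by hand as you sketch, and it passes from $A\ot B\in\cl B(\cl H_1)\ot\Ref\cl V$ to $B\in\Ref\cl V$ without spelling out the slice-map step you make explicit.
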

\proof
(a) Clearly, we may assume that $A\neq 0$.
If $T\in\Ref_e (A\otimes\cl V)$, then by Lemma \ref{fubini},
$T\in ( \cl B(\cl H_1)\ot\Ref\cl V)\cap (\Ref \bb C A \ot\cl B(\cl H_2))$. But $\Ref \bb C A=\bb C A$
since one dimensional subspaces are reflexive (see \cite[56.5]{con}, for example),
so $T=A\ot B$ for some $B\in\cl B(\cl H_2)$.
Thus $A\ot B\in\cl B(\cl H_1)\ot\Ref\cl V$, which implies that $B\in\Ref\cl V$.

(b) follows from Lemma \ref{fubini}.
\prend

\begin{lemma}\label{slic}
Let $\cl S\subseteq\cl B(\cl H_1\otimes \cl H_2)$ be a subspace of operators
and $\omega \in \cl B(\cl H_1)_*$ be a vector functional.
Then $R_{\omega}(\Ref_e \cl S)\subseteq \Ref R_{\omega}(\cl S)$.

Similarly, if $\tau\in \cl B(\cl H_2)_*$ is a vector functional
then $L_{\tau}(\Ref_e \cl S)\subseteq \Ref L_{\tau}(\cl S)$.
\end{lemma}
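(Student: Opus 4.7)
The plan is to reduce everything to the identity~(\ref{red}). Writing $\omega = \omega_{\xi,\eta}$ with $\xi,\eta\in\cl H_1$, that identity says that for every $A\in\cl B(\cl H_1\otimes\cl H_2)$ and every $x,y\in\cl H_2$,
\[
\omega_{x,y}\bigl(R_{\omega}(A)\bigr) \;=\; \sca{R_{\omega}(A)x,y} \;=\; \sca{A(\xi\otimes x),\eta\otimes y} \;=\; \omega_{\xi\otimes x,\eta\otimes y}(A).
\]
The key observation is that the right-hand functional belongs to $\cl E$, which is precisely the set of functionals used to define $\Ref_e$.

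Given $T\in\Ref_e \cl S$, to show $R_\omega(T)\in\Ref R_\omega(\cl S)$ I would take an arbitrary vector functional $\omega_{x,y}$ annihilating $R_\omega(\cl S)$. The displayed identity, applied to every $S\in\cl S$, translates this hypothesis into $\omega_{\xi\otimes x,\eta\otimes y}(\cl S) = \{0\}$. Because $\omega_{\xi\otimes x,\eta\otimes y}\in\cl E$, the definition of $\Ref_e$ forces $\omega_{\xi\otimes x,\eta\otimes y}(T)=0$, and applying the identity once more yields $\omega_{x,y}(R_\omega(T))=0$. Since $x,y$ were arbitrary, this is the required reflexivity condition for $R_\omega(T)$ relative to $R_\omega(\cl S)$.

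The second assertion is completely symmetric: if $\tau = \omega_{x,y}$ then the analogous formula $\omega_{\xi,\eta}(L_\tau(A)) = \omega_{\xi\otimes x,\eta\otimes y}(A)$ holds, and the same three-line argument with the roles of the two tensor factors swapped gives $L_\tau(\Ref_e\cl S)\subseteq \Ref L_\tau(\cl S)$.

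There is essentially no obstacle: the entire content of the lemma is the observation that vector functionals on $\cl H_2$ composed with a right slice along a vector functional on $\cl H_1$ produce exactly the elementary tensor functionals in $\cl E$. The only care needed is to keep track of which Hilbert space each vector functional lives on, so that one applies $\Ref_e$ against the correct class of testing functionals.
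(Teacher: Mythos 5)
Your argument is correct and is essentially identical to the paper's proof: both use identity~(\ref{red}) to convert the hypothesis $\omega_{x,y}(R_\omega(\cl S))=\{0\}$ into $\omega_{\xi\otimes x,\eta\otimes y}(\cl S)=\{0\}$, invoke the definition of $\Ref_e$ via the class $\cl E$, and translate back. Nothing further is needed.
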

\proof Let $\omega = \omega_{\xi,\eta}$, where
$\xi,\eta\in \cl H_1$. 
Fix $T\in \Ref_e \cl S$ and suppose that $x,y\in \cl H_2$
are such that $\omega_{x,y}(R_{\omega}(\cl S)) = \{0\}$.
It follows from (\ref{red}) that
$$\omega_{\xi\otimes x, \eta\otimes y}(\cl S) = \{0\}.$$
Since $T\in \Ref_e\cl S$, we have that
$\omega_{\xi\otimes x, \eta\otimes y}(T) = \{0\}$. By (\ref{red}) again,
$\omega_{x,y}(R_{\omega}(T))$ $= \{0\}$. We showed that $R_{\omega}(T)\in \Ref R_{\omega}(\cl S)$.
The first claim is proved.
The second claim follows similarly.
\prend

\begin{proposition}\label{distinct}
For a projection $L\in \cl B(\cl H_1\otimes\cl H_2)$, let
$\tilde{L}$ be the projection onto the subspace
$\{\xi \otimes x : L(\xi\otimes x) = 0\}^{\perp}$.
Let $P,Q\in \cl B(\cl H_1\otimes\cl H_2)$ be projections.
Then
$$\Ref\mbox{}_e Q\cl B(\cl H_1\otimes\cl H_2)P =  \tilde{Q}\cl B(\cl H_1\otimes\cl H_2)\tilde{P}.$$

In particular,
there exists a subspace $\cl S\subseteq\cl B(\cl H_1\otimes\cl H_2)$
such that $\Ref_e\cl S$ is strictly bigger than $\Ref\cl S$.
\end{proposition}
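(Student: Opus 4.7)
The plan is to pass from the defining condition of $\Ref_e\cl S$ for $\cl S:=Q\cl B(\cl H_1\ot\cl H_2)P$ to a concrete condition involving the sets $N(L):=\{\xi\ot x:L(\xi\ot x)=0\}$ that enter the definition of $\tilde L$. Note that, by construction, $\tilde L$ is the projection onto $N(L)^\perp$, so $\ker\tilde L=\overline{[N(L)]}$. The first step is the easy observation that $\omega_{\xi\ot x,\eta\ot y}\in\cl E$ annihilates $\cl S$ exactly when $\xi\ot x\in N(P)$ or $\eta\ot y\in N(Q)$: indeed $\langle QAP(\xi\ot x),\eta\ot y\rangle=\langle A(P(\xi\ot x)),Q(\eta\ot y)\rangle$, and varying $A$ over rank-one operators makes this clear.

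For the inclusion $\Ref_e\cl S\subseteq\tilde Q\cl B(\cl H_1\ot\cl H_2)\tilde P$, I would fix $T\in\Ref_e\cl S$ and argue that $T$ vanishes on $\ker\tilde P$ while $T^*$ vanishes on $\ker\tilde Q$. If $\xi\ot x\in N(P)$, the condition $T\in\Ref_e\cl S$ forces $\langle T(\xi\ot x),\eta\ot y\rangle=0$ for all $\eta,y$; since elementary tensors are total, $T(\xi\ot x)=0$, and linearity with continuity extends this to $\overline{[N(P)]}=\ker\tilde P$, so $T=T\tilde P$. The symmetric argument via $N(Q)$ gives $T=\tilde QT$. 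Conversely, if $T=\tilde QA\tilde P$ and $\omega_{\xi\ot x,\eta\ot y}$ annihilates $\cl S$, then $\xi\ot x\in N(P)\subseteq\ker\tilde P$ or $\eta\ot y\in N(Q)\subseteq\ker\tilde Q$, and in either case $\omega_{\xi\ot x,\eta\ot y}(T)=\langle A\tilde P(\xi\ot x),\tilde Q(\eta\ot y)\rangle=0$.

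For the ``in particular'' assertion I would exhibit an explicit example with $\tilde P\ne P$: take $\cl H_1=\cl H_2=\bb C^2$, $Q=I$, and let $P$ be the projection onto the orthogonal complement of the entangled line $\bb C(e_1\ot e_1+e_2\ot e_2)$. Then $\ker P$ contains no nonzero simple tensor, so $N(P)=\{0\}$ and $\tilde P=I$. The formula just proved gives $\Ref_e\cl S=\cl B(\cl H_1\ot\cl H_2)$, whereas $\cl S=\cl B(\cl H_1\ot\cl H_2)P$ is a proper subspace that is itself reflexive (a routine computation with the vector functionals $\omega_{x,y}$ shows that the only relevant annihilators are those with $Px=0$, forcing $T=TP$), so $\Ref\cl S=\cl S\subsetneq\Ref_e\cl S$.

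The only subtle point in the whole argument is the asymmetry between the set $N(L)$ of elementary tensors killed by $L$ and its closed linear span $\overline{[N(L)]}=\ker\tilde L$; this asymmetry is exactly what prevents $\tilde L$ from always equalling $L$, and hence what makes $\Ref_e$ genuinely larger than $\Ref$ in the example above.
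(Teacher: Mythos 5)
Your proof is correct and follows essentially the same route as the paper: characterise the annihilating functionals in $\cl E$, deduce $T=T\tilde P$ and $T=\tilde Q T$ from the totality of elementary tensors, verify the converse directly, and produce a projection $P\ne I$ killing no nonzero elementary tensor (so $\tilde P=I$) via the orthocomplement of an entangled line. The only difference is that you spell out why $\cl B(\cl H_1\otimes\cl H_2)P$ is itself reflexive, a point the paper leaves implicit.
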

\proof
Fix projections $P,Q\in \cl B(\cl H_1\otimes\cl H_2)$ and let
$\cl S = Q\cl B(\cl H_1\otimes\cl H_2)P$.
It is clear that
\begin{equation}\label{eq_anni}
\cl S_{\perp}\cap \cl E = \{\omega_{\xi\otimes x, \eta\otimes y} : P(\xi\otimes x) = 0 \mbox{ or }
Q(\eta\otimes y) = 0\}.
\end{equation}
Hence, $T\in \Ref_e\cl S$ if and only if
$\sca{T(\xi\otimes x),\eta\otimes y} = 0$ for all $\xi,\eta\in \cl H_1$ and all $x,y\in \cl H_2$
such that either $P(\xi\otimes x) = 0$ or $Q(\eta\otimes y) = 0$.

Suppose $T\in \Ref_e\cl S$.
If $\xi\in \cl H_1$ and $x\in \cl H_2$ are such that
$P(\xi\ot x)=0$ then for any $\eta\in \cl H_1$ and $y\in \cl H_2$ we have $\sca{T(\xi\otimes x),\eta\otimes y} = 0$
and so $T(\xi\otimes x) = 0$. But $\tilde P^\bot(\cl H_1\otimes\cl H_2) = \overline{[\xi\ot x: P(\xi\ot x)=0]}$.
It follows that $T\tilde P^\bot = 0$, or $T=T\tilde P$.
By considering adjoints, we conclude that $T=\tilde QT$, and
thus $T = \tilde{Q}T\tilde{P}$.
Conversely, if $T$ is of this form then $T\in \Ref_e\cl S$ by the previous paragraph.

For the last statement, it is enough to exhibit a projection $P\in
\cl B(\cl H_1\otimes\cl H_2)$ such that $\tilde{P}$ is strictly
greater than $P$. It suffices to choose any $P \neq I$ which
annihilates no non-trivial elementary tensors. For example, take
$P=F^\bot$, where $F$ is the projection onto $\{\lambda(e_1\otimes
f_1 + e_2\otimes f_2) : \lambda\in \bb{C}\}$ and where
$\{e_1,e_2\}\subseteq\cl H_1$ (resp. $\{f_1,f_2\}\subseteq\cl H_2$)
is linearly independent. Here 
$P\ne I$ but $\tilde P=I$. \prend

\begin{example}\label{bigger}
Let $\cl H_1$ be infinite dimensional, $V\in\cl{B(H}_1)$ be an
isometry and $\cl S\subseteq \cl B(\cl H_2)$ be a weak-* closed
subspace. Then

$(i)$ \ $\Ref (V\ot\cl S) =V\ot\cl S$.

$(ii)$ If $\cl S$ is not reflexive, then
\[ \Ref (V\ot\cl S)\subsetneqq\Ref\mbox{}_e (V\otimes\cl S).
\]
\end{example}
\proof The  equality $\Ref (V\ot\cl S) =V\ot\cl S$ is well-known when $V$ is the identity
(see \cite[59.7]{con}, for example),
and the proof readily extends to
the general case.

Since $\Ref_e (V\ot\cl S)=V\ot\Ref\cl S$ by Corollary \ref{new},
 if $\cl S$ is not reflexive, then $\Ref (V\otimes\cl S)$
is strictly contained in $\Ref_e (V\ot\cl S)$.
\prend

\section{Reflexive hulls and Fourier coefficients}\label{s_rhfc}

We recall that for each $\nph\in H^{\infty}$, we denote by
$T_\nph$ the analytic Toeplitz operator on $H^2$ with symbol $\nph$ and by
$\cl T$ the collection of all analytic Toeplitz operators on $H^2$.
Let $\zeta_n\in H^2$ be the function given by $\zeta_n(z)=z^n$,  $z\in\bb T$.
We note that $\{\zeta_{n} : n\geq 0\}$ is an orthonormal basis of $H^2$.
Let $S=T_{\zeta_1}\in \cl T$ be the unilateral shift.

For the rest of this section, we fix a Hilbert space $\cl K$.
We note that $(\cl T\otimes\cl B(\cl K))' = \cl T\otimes I$.
Indeed, if $T\in (\cl T\otimes\cl B(\cl K))'$ then $T\in (I\otimes\cl B(\cl K))'$ and hence
$T = A\otimes I$ for some $A\in \cl B(H^2)$.
It now follows that $A\in \cl T' = \cl T$ \cite{sara}.
Thus, $(\cl T\otimes\cl B(\cl K))'' = (\cl T\otimes I)'$.
Now, if $X\in (\cl T\otimes I)'$ then $X(T\otimes I) = (T\otimes I)X$
for all $T\in \cl T$. Applying left slice maps we obtain $L_{\omega}(X)T = TL_{\omega}(X)$
for all normal functionals $\omega$ and all $T\in \cl T$. Thus, $L_{\omega}(X)\in \cl T' = \cl T$ for
all normal functionals $\omega$, which means by Lemma \ref{slice} that
$X\in \cl T\otimes \cl B(\cl K)$.
We conclude that $(\cl T\otimes\cl B(\cl K))''=\cl T\otimes\cl B(\cl K)$
and in particular that $\cl T\otimes\cl B(\cl K)$ is automatically weakly closed.

If $T\in \cl T\otimes\cl B(\cl K)$, let $\hat{T}_{n}$, where $n \geq
0$, be the operators determined by the identity
$$T(\zeta_{0}\otimes x) = \sum_{n\geq 0} \zeta_{n}\otimes \hat{T}_n x, \ \ \ x\in \cl K.$$
Alternatively, $\hat{T}_n = R_{\omega_n}(T)$, where $\omega_n=\omega_{\zeta_0,\zeta_n}$, $n\geq 0$.

We call $\sum_{n \geq 0} S^{n}\otimes \hat{T}_{n}$
the formal Fourier series of $T$.
When $\cl K$ is one dimensional, this is the usual Fourier series of
an operator $T\in \cl T$.
By standard arguments, as in the scalar case, the Cesaro sums of this series
converge to $T$ in the weak-* topology.

If $\bb{S}$ is a family $(\cl S_n)_{n\geq 0}$ of subspaces of
$\cl B(\cl K)$, we let
$$\cl A(\bb{S}) = \{T\in \cl T\otimes \cl B(\cl K) : \hat{T}_n\in \cl S_n, \ \ n\geq 0\}.$$
It is obvious that $\cl A(\bb{S})$ is a linear space; it is a subalgebra of $\cl B(H^2\otimes \cl K)$
if and only if $\cl S_n\cl S_m\subseteq \cl S_{n+m}$, for all $n,m\geq 0$.

\begin{remark}\label{wcl}
If $\cl S_n$ is closed in the weak operator (resp. the weak-*)
topology and $\bb{S} = (\cl S_n)_{n\geq 0}$ then $\cl A(\bb{S})$ is
closed in the weak operator (resp. the weak-*) topology.
\end{remark}

This follows from the fact that the slice maps $R_{\omega_n}$ are continuous
both in the weak-weak and the weak-*-weak-* sense.

\begin{remark}\label{tens}
If $\cl S \subseteq \cl B(\cl K)$ is a weak-* closed space and $\cl S_n = \cl S$ for each $n\geq 0$,
then
$\cl A(\bb{S}) = \cl T\otimes\cl S$.
\end{remark}
Indeed, if $A\in \cl S$ and $k\geq 0$ then obviously $S^k\otimes A\in \cl A(\bb{S})$
and hence  $\cl T\otimes\cl S\subseteq\cl A(\bb{S})$ since the latter is weak-* closed.

Conversely, suppose that $T\in \cl T\otimes\cl B(\cl K)$ is such that
$\hat{T}_n\in \cl S$ for each $n\geq 0$. Then $S^n\otimes\hat{T}_n\in \cl T\otimes\cl S$
and hence the Cesaro sums of the Fourier series of $T$ are in   $\cl T\otimes\cl S$.
But $\cl T\ot\cl S$ is weak-* closed, and so $T\in \cl T\otimes\cl S$.

\medskip

If $\bb{S} = (\cl S_n)_{n\geq 0}$ we let
$\Ref\bb{S} \stackrel{def}{=} (\Ref\cl S_n)_{n \geq 0}$.

\begin{theorem}\label{sref}
If $\bb{S} = (\cl S_n)_{n\geq 0}$ is a sequence of subspaces of $\cl B(\cl K)$ then 
$\Ref_e \cl A(\bb{S}) = \cl A(\Ref\bb{S})$.
In particular, if $\cl S_n$ is reflexive for each
$n\geq 0$ then $\cl A(\bb{S})$ is reflexive.
\end{theorem}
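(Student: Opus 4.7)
The plan is to prove the two containments of $\Ref\mbox{}_e\cl A(\bb S)=\cl A(\Ref\bb S)$ separately. The forward direction will use Lemma \ref{slic} to push the reflexive hull across the slice maps that recover the Fourier coefficients, while the reverse direction will apply Corollary \ref{new}(a) to each homogeneous term $S^n\otimes\hat T_n$ and then invoke the Cesaro approximation of $T$ by Fourier partial sums.

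For $\Ref\mbox{}_e\cl A(\bb S)\subseteq\cl A(\Ref\bb S)$, I would fix $T\in\Ref\mbox{}_e\cl A(\bb S)$ and first check that $T\in\cl T\otimes\cl B(\cl K)$, so that the coefficients $\hat T_n$ are even defined. Since $\cl A(\bb S)\subseteq\cl T\otimes\cl B(\cl K)$, monotonicity of $\Ref\mbox{}_e$ combined with Corollary \ref{new}(b) and Sarason's theorem (the reflexivity of $\cl T$) gives
\[
T\in\Ref\mbox{}_e(\cl T\otimes\cl B(\cl K))=\Ref\cl T\otimes\cl B(\cl K)=\cl T\otimes\cl B(\cl K).
\]
Next, Lemma \ref{slic} applied to the vector functional $\omega_n=\omega_{\zeta_0,\zeta_n}$ yields $\hat T_n=R_{\omega_n}(T)\in\Ref R_{\omega_n}(\cl A(\bb S))$. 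The small computation $R_{\omega_n}(\cl A(\bb S))=\cl S_n$ is essentially tautological: the inclusion $\subseteq$ is the definition of $\cl A(\bb S)$, while $\supseteq$ uses that $S^n\otimes A$ has its Fourier series concentrated in the single index $n$, hence belongs to $\cl A(\bb S)$ for every $A\in\cl S_n$. Thus $\hat T_n\in\Ref\cl S_n$ for all $n$, giving $T\in\cl A(\Ref\bb S)$.

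For the reverse inclusion, take $T\in\cl A(\Ref\bb S)$. Using $S^n\otimes\cl S_n\subseteq\cl A(\bb S)$ once more, Corollary \ref{new}(a) (applied with $A=S^n\neq 0$) yields
\[
S^n\otimes\hat T_n\in S^n\otimes\Ref\cl S_n=\Ref\mbox{}_e(S^n\otimes\cl S_n)\subseteq\Ref\mbox{}_e\cl A(\bb S).
\]
Each Cesaro partial sum of the Fourier series of $T$ is a finite linear combination of these homogeneous terms and therefore also lies in $\Ref\mbox{}_e\cl A(\bb S)$. The main technical point — and the step I expect to require the most care — is passing to the weak-* limit: one must observe that $\Ref\mbox{}_e\cl A(\bb S)$ is closed in the weak-* (not merely the weak) topology. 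This holds because every $\omega\in\cl E$ is a vector functional, hence weak-* continuous, so $\Ref\mbox{}_e\cl A(\bb S)$ is an intersection of weak-*-closed hyperplanes $\ker\omega$. Since the Cesaro sums converge to $T$ in the weak-* topology, $T\in\Ref\mbox{}_e\cl A(\bb S)$. The ``in particular'' statement is then immediate: if each $\cl S_n$ is reflexive, then $\cl A(\Ref\bb S)=\cl A(\bb S)$, so $\Ref\mbox{}_e\cl A(\bb S)=\cl A(\bb S)$, and by Lemma \ref{easin}(ii) this forces $\Ref\cl A(\bb S)=\cl A(\bb S)$.
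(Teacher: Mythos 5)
Your proposal is correct and follows essentially the same route as the paper: both directions use Lemma \ref{slic} with the functionals $\omega_{\zeta_0,\zeta_n}$ for the forward inclusion and Corollary \ref{new}(a) plus weak-* convergence of the Cesaro sums for the reverse, with the ``in particular'' part deduced from Lemma \ref{easin}(ii) exactly as in the paper. The only cosmetic differences are that you prove the equality $R_{\omega_n}(\cl A(\bb S))=\cl S_n$ where the paper only needs the inclusion $\subseteq$, and that you verify weak-* closedness of $\Ref_e\cl A(\bb S)$ directly from the vector functionals rather than citing Lemma \ref{easin}(i).
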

\proof
First observe that $\Ref_e(\cl T\otimes\cl B(\cl K)) = (\Ref\cl T)\otimes\cl B(\cl K)$ 
by Corollary \ref{new}. But $\cl T$ is reflexive \cite{sara}
and hence $\Ref_e(\cl T\ot\cl B(\cl K)) = \cl T\ot\cl B(\cl K)$.

Let $T\in \Ref_e\cl A(\bb{S})$. As just observed, $T\in \cl T\otimes\cl B(\cl K)$.
By Lemma \ref{slic}, for each $n\ge 0$, writing $\omega_n=\omega_{\zeta_0,\zeta_n}$, we have
$$R_{\omega_n}(T)\in \Ref R_{\omega_n}(\cl A(\bb{S}))\subseteq \Ref\cl S_n$$
since  $R_{\omega_n}(\cl A(\bb{S}))\subseteq \cl S_n$ by the definition of $\cl A(\bb S)$.
In other words, $\hat{T}_n\in \Ref\cl S_n$ for all  $n\geq 0$, and so $T\in \cl A(\Ref\bb{S})$.

Conversely, suppose that $T\in \cl A(\Ref\bb{S})$, that is, $\hat{T}_n\in \Ref\cl S_n$ for
each $n\geq 0$. By Corollary \ref{new},
$S^n\otimes \hat{T}_n\in \Ref_e(S^n\otimes \cl S_n)$,
$n\geq 0$. Since $S^n\otimes\cl S_n\subseteq\cl A(\bb{S})$, we conclude that
$S^n\otimes \hat{T}_n\in \Ref_e \cl A(\bb{S})$, $n\geq 0$.
By Lemma \ref{easin} (i) and the fact that $T$ is in the weak-* closed linear hull of
$\{S^n\otimes \hat{T}_n : n\geq 0\}$ we have that $T\in \Ref_e\cl A(\bb{S})$.

Suppose that $\cl S_n$ is reflexive for each $n\geq 0$. By Lemma \ref{easin} (ii)
and the first part of the proof,
$$\cl A(\bb{S}) \subseteq\Ref\cl A(\bb{S})\subseteq \Ref\mbox{}_e\cl A(\bb{S}) = \cl A(\bb{S})$$
and hence $\cl A(\bb{S})$ is reflexive.
\prend

As an immediate corollary of Theorem \ref{sref}  we obtain the following result,
proved for reflexive algebras by Kraus \cite{kraus} and Ptak \cite{ptak}.

\begin{corollary}\label{ptak}
Let $\cl S\subseteq \cl B(\cl K)$ be a reflexive subspace. Then $\cl T\otimes\cl S$
is reflexive.
\end{corollary}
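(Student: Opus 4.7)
The plan is to deduce the corollary immediately from Theorem \ref{sref} by feeding in a constant sequence. I would set $\bb{S} = (\cl S_n)_{n\geq 0}$ with $\cl S_n = \cl S$ for every $n\geq 0$. Since $\cl S$ is reflexive, $\cl S = \Ref\cl S$ is WOT-closed (reflexive hulls are always WOT-closed, hence weak-* closed), so the hypothesis of Remark \ref{tens} is met and one has the identification $\cl A(\bb{S}) = \cl T\otimes\cl S$. Then the final clause of Theorem \ref{sref}, applied to this constant sequence of reflexive subspaces, gives that $\cl A(\bb{S})$ is reflexive, which reads $\cl T\otimes\cl S = \Ref(\cl T\otimes\cl S)$.

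There is no real obstacle here: the corollary is a specialisation of Theorem \ref{sref} to a constant sequence, and the substantive content (Sarason's reflexivity of $\cl T$, the Cesaro-sum argument for operator-valued Fourier series, and the slice-map machinery of Section \ref{s_rhs}) has already been absorbed into the proof of Theorem \ref{sref}. The only point worth noting explicitly is the small preliminary verification that a reflexive subspace is automatically weak-* closed, which is needed to invoke Remark \ref{tens}.
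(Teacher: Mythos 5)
Your proposal is correct and is essentially the paper's own argument: the authors derive the corollary immediately from Theorem \ref{sref} by taking the constant sequence $\cl S_n=\cl S$ and using Remark \ref{tens} to identify $\cl A(\bb S)$ with $\cl T\otimes\cl S$. Your explicit observation that a reflexive subspace is automatically WOT-closed (hence weak-* closed, as required by Remark \ref{tens}) is a correct and worthwhile detail that the paper leaves implicit.
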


\begin{remark}
{\rm We note that $\Ref_e\cl A(\bb{S})$ is in general
strictly larger than $\Ref\cl A(\bb{S})$.
Indeed, let $\cl S\subseteq \cl B(\cl K)$ be a non-reflexive weak-* closed subspace
and $\bb{S} = (\cl S_n)_{n\geq 0}$ be the family with $\cl S_1 = \cl S$
and $\cl S_n = \{0\}$ if $n\neq 1$. Then
$\cl A(\bb{S}) = S\otimes\cl S$ is reflexive (Example \ref{bigger} (i)).
However, by Theorem \ref{sref},
$\Ref_e\cl A(\bb{S}) = S\otimes \Ref\cl S$ which strictly contains
$\cl A(\bb{S})$.}
\end{remark}


The following corollary will be used in 
Theorem \ref{drinfol}.

\begin{corollary}\label{hase}
Let $U,V\in \cl B(\cl K)$ satisfy $UV = \lambda VU$ for some $\lambda\in\bb{C}$.
Suppose that $V$ is invertible and that the weak-* closure $\cl W_0$
of the polynomials in $U$ is reflexive.
Then the weak-* closed unital operator algebra $\cl W\subseteq\cl B(H^2\otimes \cl K)$
generated by $I\otimes U$ and $S\otimes V$ is reflexive.
\end{corollary}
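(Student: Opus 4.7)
The plan is to realize $\cl W$ in the form $\cl A(\bb S)$ for a family $\bb S = (\cl S_n)_{n\geq 0}$ of reflexive, weak-* closed subspaces of $\cl B(\cl K)$ satisfying $\cl S_n \cl S_m \subseteq \cl S_{n+m}$, and then invoke Theorem \ref{sref}. The natural candidate is $\cl S_n = \cl W_0 V^n$.

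First I would set up the bookkeeping on the $\cl S_n$. Excluding the degenerate case $\lambda = 0$ (which, since $V$ is invertible, forces $U = 0$ and makes the statement trivial), the relation $UV = \lambda VU$ can be rewritten as $V^{-1}UV = \lambda U$. Iterating, $V^{-n}U^k V^n = \lambda^{nk} U^k$, so that conjugation by $V^n$ carries polynomials in $U$ to polynomials in $U$; by weak-* continuity of conjugation by the invertible $V^{\pm n}$, this upgrades to $V^{-n}\cl W_0 V^n = \cl W_0$, equivalently $V^n \cl W_0 = \cl W_0 V^n$. It follows that $\cl S_n \cl S_m = \cl W_0 V^n \cl W_0 V^m = \cl W_0 \cl W_0 V^{n+m} = \cl S_{n+m}$. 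Each $\cl S_n$ is weak-* closed because right multiplication by the invertible $V^n$ is a weak-* homeomorphism, and reflexive because the identity $\omega_{x,y}(AV^n) = \omega_{V^n x, y}(A)$ yields $T\in\Ref\cl S_n \Leftrightarrow TV^{-n}\in\Ref\cl W_0 = \cl W_0$. By Remark \ref{wcl}, $\cl A(\bb S)$ is therefore a weak-* closed unital algebra.

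It remains to identify $\cl W$ with $\cl A(\bb S)$. For $\cl W \subseteq \cl A(\bb S)$, one observes that $I\otimes U$ has Fourier coefficients $(U,0,0,\dots)$ with $U\in \cl W_0 = \cl S_0$, while $S\otimes V$ has Fourier coefficients $(0,V,0,\dots)$ with $V = I\cdot V \in \cl W_0 V = \cl S_1$; since $\cl A(\bb S)$ is a weak-* closed unital algebra, it contains $\cl W$. For the reverse inclusion, take $T\in\cl A(\bb S)$ and write $\hat T_n = A_n V^n$ with $A_n\in\cl W_0$. Because $I\otimes U\in\cl W$, the weak-* closure of the polynomials in $I\otimes U$, which equals $I\otimes\cl W_0$, is contained in $\cl W$; hence
\[
S^n\otimes \hat T_n \;=\; (I\otimes A_n)(S\otimes V)^n \;\in\; \cl W,
\]
and the Ces\`aro means of the Fourier series of $T$ lie in $\cl W$ and converge weak-* to $T$, so $T\in\cl W$. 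Theorem \ref{sref} now yields reflexivity.

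The main technical point is the reflexivity of each $\cl S_n$, where the invertibility of $V$ is genuinely used; the other key ingredient is the ``twisted centralisation'' $V^n\cl W_0 = \cl W_0 V^n$, which is precisely where the commutation $UV=\lambda VU$ enters. Once these are in place, the identification $\cl W = \cl A(\bb S)$ and the appeal to Theorem \ref{sref} are routine.
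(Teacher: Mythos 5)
Your proof is correct and follows essentially the same route as the paper: both realise $\cl W$ as $\cl A(\bb S)$ with $\cl S_n$ the translate of $\cl W_0$ by $V^n$ (the paper writes $V^n\cl W_0$, which equals your $\cl W_0V^n$ by the twisted centralisation you establish) and then invoke Theorem \ref{sref}. The only cosmetic difference is that you deduce $\cl W\subseteq\cl A(\bb S)$ from the multiplicativity $\cl S_n\cl S_m\subseteq\cl S_{n+m}$, whereas the paper computes the slices $R_{\omega_{\zeta_0,\zeta_n}}(S^k\otimes V^kU^m)$ of the spanning monomials directly; both are sound.
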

\proof
The commutation relation $UV = \lambda VU$ implies that $\cl W$ is the weak-*
closed linear hull of the set $\{S^k\otimes V^kU^m : k,m\geq 0\}$.

Let $\bb{S} = (V^n\cl W_0)_{n\geq 0}$.
We claim that $\cl W = \cl A(\bb{S})$.
Suppose that $T\in \cl T\otimes\cl B(\cl K)$ and that $\hat{T}_n\in V^n\cl W_0$,
$n\geq 0$. Then
$$S^n\otimes \hat{T}_n\in S^n\otimes V^n\cl W_0 = (S^n\otimes V^n)(I\otimes\cl W_0)\subseteq \cl W.$$
It follows by approximation (in the w*-topology) that $T\in \cl W$. Thus, $\cl A(\bb{S})\subseteq\cl W$.

To show that $\cl W\subseteq \cl A(\bb{S})$, it suffices to prove that
$S^k\otimes V^kU^m\in \cl A(\bb{S})$, for each $k,m\geq 0$. So, fix such $k$ and $m$
and note that, if $x,y\in \cl K$ then
\begin{eqnarray*}
\sca{R_{\omega_{\zeta_0,\zeta_n}}(S^k\ot V^kU^m)x,y} & = & \sca{(S^k\ot V^kU^m)(\zeta_0\ot x),\zeta_n\otimes y}\\
& = & \sca{\zeta_k\ot V^kU^m x, \zeta_n\ot y}
=  \delta_{k,n}\sca{V^kU^m x,y}.
\end{eqnarray*}
Thus $R_{\omega_{\zeta_0,\zeta_n}}(S^k\ot V^kU^m) = \delta_{k,n}V^kU^m\in V^n\cl W_0$
for all $n$ and hence $S^k\ot V^kU^m \in \cl A(\bb S)$ as required.

Now observe that, since $V$ is invertible and $\cl W_0$ is reflexive, each $\cl S_n=V^n\cl W_0$ is reflexive.
It therefore follows from Theorem \ref{sref} that $\cl W=\cl A(\bb S)$ is reflexive.
\prend

\begin{remark}
Part of Theorem \ref{sref} and Corollary \ref{ptak}
have been independently obtained by Kakariadis \cite{kaka}. 
\end{remark}

\section{The structure of $\cl T_L(\bb H^+)$}

In this section we study the weak-* closed operator algebra
$\cl T_L(\bb H^+)$ generated by the image of the left
regular representation of $\bb{H}^+$ restricted to the invariant subspace
$\cl H=\ell^2(\bb{H}^+)$.
We identify $\cl H$ with  $\ell^2(\bb Z) \otimes\ell^2(\bb{Z}_+)\otimes \ell^2(\bb{Z}_+)$,
where the element of the canonical orthonormal basis of $\cl H$
corresponding to $w^nu^kv^m\in\bb H^+$
is identified with the elementary tensor $w^n\otimes u^k\otimes v^m$.
Then $\cl T_L(\bb H^+)$ is generated by the operators $L_u,L_v$ and $L_w$ on $\cl H$ which act  as follows:
\begin{align*}
L_u(w^n\otimes u^k\otimes v^m) = &w^n\otimes u^{k+1}\otimes v^m \\
L_v(w^n\otimes u^k\otimes v^m) = &w^{n-k}\otimes u^k\otimes v^{m+1}\\
L_w(w^n\otimes u^k\otimes v^m) = &w^{n+1}\otimes u^{k}\otimes v^m,  \qquad (n,k,m)\in \bb Z\times\bb Z_+ \times\bb Z_+.
\end{align*}
By the commutation
relations, $\cl T_L(\bb{H}_+)$ coincides with the weak-* closed
linear span of the set
$$\{L_w^n L_u^k L_v^m: (n,k,m)\in \bb Z\times\bb Z_+ \times\bb Z_+\}.$$

Throughout this section we will identify $\ell^2(\bb{Z})$ with $L^2(\bb{T})$ via
Fourier transform in the first coordinate $w$.
In this way, the identity function $\zeta_1$ on $\bb T$ is identified with $w$
and $\cl T_L(\bb H^+)$ is
identified with an operator algebra acting on $L^2(\bb
T)\ot\ell^2(\bb Z_+\times\bb Z_+)$.
Let $\cl C$ be the weak-*
closed linear span of $\{L_w^n:n\in\bb Z\}$. This is an abelian von
Neumann algebra; it consists of all operators $\{L_f:f\in
L^\infty(\bb T)\}$ where
\[
L_f(w^n\otimes u^k\otimes v^m) = (fw^n)\otimes u^k\otimes v^m.
\]
Thus $\cl C=\cl M\ot 1\ot 1$, where
$\cl M\subseteq \cl B(L^2(\bb T))$ is the multiplication masa of $L^\infty(\bb T)$.

If $(e^{is},e^{it})\in \bb{T}\times\bb{T}$ ($s,t\in [0,2\pi)$),  
let $W_{s,t}\in \cl B(\cl H)$ be the
unitary operator given by
$$W_{s,t}(w^n\otimes u^k\ot v^m) = w^n\otimes e^{isk} u^k\otimes e^{itm}v^m,
\ \ \ (n,k,m)\in \bb{Z}\times\bb{Z}_+\times\bb{Z}_+.$$
We define an action of the two-torus $\bb{T}\times \bb{T}$ on $\bh$ by
$$\rho_{s,t}(A) = W_{s,t}AW_{s,t}^*, \ \ A\in \bh.$$
Observe that
\[\rho_{s,t}(L_u) = e^{is}L_u, \qquad  \rho_{s,t}(L_v) = e^{it}L_v, \qquad  \rho_{s,t}(L_w) = L_w.\]
Hence, $\rho_{s,t}$ leaves $\cl T_L(\bb H^+)$ invariant. Since $\rho_{s,t}$
is unitarily implemented, it also leaves $\Ref\cl T_L(\bb H^+)$ invariant.

If $A\in\cl T_L(\bb H^+)$ is a ``trigonometric polynomial'', namely a sum
\[A = \sum_{(k,m)\in \Omega} L_{f_{k,m}}L_u^kL_v^m,\]
where $\Omega\subseteq \bb{Z}_+\times\bb{Z}_+$ is finite and
$f_{k,m}\in L^{\infty}(\bb{T}) \, ((k,m)\in \Omega)$,
then it is easy to observe that
\begin{align*}
\frac{1}{4\pi^2}\int_0^{2\pi}\!\int_0^{2\pi} \rho_{s,t}(A)e^{-isk}e^{-itm}dtds=L_{f_{k,m}}L_u^kL_v^m.
\end{align*}

We will need the following proposition, which is a version of
well-known facts adapted to our setting.

\begin{proposition}\label{tree}
For $k,m\in \bb{Z}_+$, let $Q_{k,m}\in \cl B(\cl H)$ be the
orthogonal projection onto the subspace $L^2(\bb T)\ot [u^k]\ot [v^m]$
spanned by the vectors of the form $f\ot u^{k}\ot v^{m}$, $f\in L^2(\bb T)$.
If $A \in \cl\bh$ and $p,q\in\bb Z$, set
\[\Phi_{p,q}(A)=\sum_{k,m} Q_{k+p,m+q}AQ_{k,m},\]
where the sum is taken over all $k,m \in \bb{Z}_+$ such that $k+p,m+q\in \bb{Z}_+$.
The following statements hold:
\begin{enumerate}
\item
$\displaystyle \Phi_{p,q}(A)=\frac{1}{4\pi^2}\int_0^{2\pi}\!\!\int_0^{2\pi}
\rho_{s,t}(A)e^{-isp}e^{-itq}dtds$.
\item If $0<r<1$ then the series \[\sum_{p,q\in \bb Z} \Phi_{p,q}(A)  r^{|p|+|q|}\]
converges absolutely in norm to an operator $A_r$; moreover,
$\nor{A_r}\le \nor{A}$ and $\text{w*-}\lim_{r\nearrow 1}A_r = A$.
\item
If $\Phi_{p,q}(A) = 0$ for all $p,q\in\bb{Z}$ then $A = 0$.
\item If $A\in\cl T_L(\bb H^+)$ and $B\in\Ref\cl T_L(\bb H^+)$ then
$\Phi_{p,q}(A), A_r\in\cl T_L(\bb H^+)$ and
 $\Phi_{p,q}(B), B_r\in \Ref\cl T_L(\bb H^+)$, for all $p,q\in \bb{Z}$.
\end{enumerate}
\end{proposition}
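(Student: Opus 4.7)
The plan is to treat $\Phi_{p,q}(A)$ as the $(p,q)$-th Fourier coefficient of the operator-valued function $(s,t)\mapsto\rho_{s,t}(A)$ on $\bb{T}^2$, and then invoke standard two-dimensional Fourier/Poisson summability. The driving observation is that $W_{s,t}Q_{k,m}=e^{isk+itm}Q_{k,m}$, so $\rho_{s,t}$ scales each block $Q_{k',m'}\cl B(\cl H)Q_{k,m}$ by the character $e^{is(k'-k)+it(m'-m)}$. For part (1), I would verify the identity weakly: pairing both sides with a vector functional $\omega_{\xi,\eta}$ where $\xi\in Q_{k,m}\cl H$ and $\eta\in Q_{k',m'}\cl H$, the right-hand integral reduces to $\sca{A\xi,\eta}\cdot\frac{1}{4\pi^2}\int\!\!\int e^{is(k'-k-p)}e^{it(m'-m-q)}\,ds\,dt$, which equals $\sca{A\xi,\eta}$ when $k'-k=p$ and $m'-m=q$ and vanishes otherwise; this matches $\sca{\Phi_{p,q}(A)\xi,\eta}$. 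Since vectors of this form are total in $\cl H$, the operator identity follows.

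For part (2), substituting the formula of (1) into the series gives $A_r=\frac{1}{4\pi^2}\int_0^{2\pi}\!\int_0^{2\pi}P_r(s)P_r(t)\rho_{s,t}(A)\,ds\,dt$, where $P_r$ is the Poisson kernel. The bound $\nor{\Phi_{p,q}(A)}\le\nor{A}$ is immediate from its integral representation (an average of the isometric conjugates $\rho_{s,t}(A)$ against a unimodular kernel), so absolute norm convergence of the defining series follows from $\sum_{p,q}r^{|p|+|q|}<\infty$. Since $\frac{1}{4\pi^2}P_r(s)P_r(t)\,ds\,dt$ is a probability measure on $\bb{T}^2$ and $\nor{\rho_{s,t}(A)}=\nor{A}$, we also get $\nor{A_r}\le\nor{A}$. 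For the weak-* convergence, fix $\omega\in\cl B(\cl H)_*$ and observe that the orbit map $f(s,t)=\omega(\rho_{s,t}(A))$ is continuous on $\bb{T}^2$ — this uses strong-operator continuity of $W_{s,t}$ together with the trace-class representation of $\omega$, reducing by approximation to the vector-functional case; two-dimensional Abel/Poisson summability then yields $\omega(A_r)\to f(0,0)=\omega(A)$.

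Part (3) is immediate from the block decomposition: if every $\Phi_{p,q}(A)=0$, then for each $\xi\in Q_{k_0,m_0}\cl H$ and each admissible $(p,q)$ we have $Q_{k_0+p,m_0+q}A\xi=\Phi_{p,q}(A)\xi=0$; since $\sum_{k,m}Q_{k,m}=I$ strongly, this forces $A\xi=0$, and varying $(k_0,m_0)$ gives $A=0$. For part (4), $\cl T_L(\bb{H}^+)$ is weak-* closed by construction and $\rho$-invariant, so the weak-* operator integral defining $\Phi_{p,q}(A)$ is a weak-* limit of finite complex-coefficient combinations $\sum c_i\rho_{s_i,t_i}(A)\in\cl T_L(\bb{H}^+)$ and therefore lies in $\cl T_L(\bb{H}^+)$; the same applies to $A_r$. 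Since $\Ref\cl T_L(\bb{H}^+)$ is reflexive (hence WOT-closed, hence weak-* closed) and was noted to be $\rho$-invariant just before the proposition, the identical argument handles the reflexive hull. The one genuinely technical point is the continuity of $(s,t)\mapsto\omega(\rho_{s,t}(A))$ used in (2); everything else is bookkeeping around standard Fourier-theoretic estimates.
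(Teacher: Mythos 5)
Your proof is correct and follows essentially the same route as the paper: part (1) is verified weakly on vectors in the ranges of the $Q_{k,m}$, part (2) is Poisson/Abel summability of the operator-valued Fourier series of $(s,t)\mapsto\rho_{s,t}(A)$, and part (4) uses $\rho$-invariance together with weak-* closedness of $\cl T_L(\bb H^+)$ and $\Ref\cl T_L(\bb H^+)$. The only (harmless) divergence is part (3), which the paper deduces from (2) via $A=\text{w*-}\lim A_r$, while you give a direct block-decomposition argument; both work.
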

\proof
{\bf (1)} Let $x = Q_{k_1,m_1}x$ and $y = Q_{k_2,m_2}y$. We have
$$\sca{\Phi_{p,q}(A)x,y}=\sca{\sum Q_{k+p,m+q}AQ_{k,m}x,y}=\delta_{k_1+p,k_2}\delta_{m_1+q,m_2}\sca{Ax,y},$$
where the summation takes place over
all $k,m\in\bb Z_+$ with $k+p, m+q\in \bb{Z}_+$. On
the other hand, we have
$$\sca{\rho_{s,t}(A)x,y}= \sca{W_{s,t}AW_{s,t}^*x,y}=e^{-isk_1-itm_1}e^{isk_2+itm_2}\sca{Ax,y}$$
and hence
\begin{align*}
&\frac{1}{4\pi^2}\int_0^{2\pi}\!\!\int_0^{2\pi} \sca{\rho_{s,t}(A)e^{-isp}e^{-itq}x,y}dtds= \\
&\frac{1}{4\pi^2}\int_0^{2\pi}\!\!\int_0^{2\pi} \sca{Ax,y}e^{is(k_2-k_1-p)}e^{it(m_2-m_1-q)}dtds
=\delta_{k_1+p,k_2}\delta_{m_1+q,m_2}\sca{Ax,y}.
\end{align*}
\medskip
{\bf (2)}
Let $F$ be the operator valued function defined on $\bb{T}\times\bb{T}$ by
$F(s,t)=\rho_{s,t}(A)$, and let $\hat{F}$ be its Fourier transform. By (1),
$\hat F(p,q)=\Phi_{p,q}(A)$. If $P_r(s,t)$
denotes the two-dimensional Poisson kernel, then one readily sees that $A_r=(F*P_r)(0,0)$.

The claim therefore follows from the well-known properties of the Poisson kernel.

\smallskip
{\bf (3)} is an immediate consequence of (2).

\smallskip
{\bf (4)}
It follows from (1) that $\Phi_{p,q}(A) \in\cl T_L(\bb H^+)$ and
$\Phi_{p,q}(B) \in \Ref\cl T_L(\bb H^+)$, since $\rho_{s,t}$ leaves
$\cl T_L(\bb H^+)$ and  $\Ref\cl T_L(\bb H^+)$ invariant.
Now (2) implies that $A_r\in\cl T_L(\bb H^+)$ and
$B_r\in \Ref\cl T_L(\bb H^+)$.
\endproof

We isolate some  consequences of Proposition \ref{tree}: 

\begin{corollary}\label{five}
If $A\in\cl T_L(\bb H^+)$ then \\ (a) $\Phi_{k,m}(A)=0$ unless $k\ge 0$ and $m\ge 0$. \\
(b) For each $k,m\ge 0$, the operator $L_{k,m}\equiv (L_v^m)^*(L_u^k)^*\Phi_{k,m}(A)$
is in $\cl C$. Hence there exists $f_{k,m}(A)\in L^\infty(\bb T)$ such that
$L_{k,m}=L_{f_{k,m}(A)}$. We have
$\Phi_{k,m}(A)=L_{f_{k,m}}(A)L_u^kL_v^m$.
\end{corollary}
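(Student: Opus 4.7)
The plan is to handle (a) and (b) by reducing everything to the ``monomial'' generators $L_w^nL_u^kL_v^m$ and then extending by linearity and weak-$\ast$ continuity of $\Phi_{p,q}$.

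For (a), I would use the integral formula in part (1) of Proposition~\ref{tree}. Since $\rho_{s,t}$ is an algebra homomorphism and acts as $\rho_{s,t}(L_u)=e^{is}L_u$, $\rho_{s,t}(L_v)=e^{it}L_v$, $\rho_{s,t}(L_w)=L_w$, every generator is an eigenvector:
\[
\rho_{s,t}(L_w^nL_u^{k'}L_v^{m'}) = e^{is k'+it m'} L_w^nL_u^{k'}L_v^{m'}, \qquad k',m' \geq 0.
\]
Inserting this in the integral representation of $\Phi_{p,q}$ and using orthogonality of the characters yields $\Phi_{p,q}(L_w^nL_u^{k'}L_v^{m'}) = \delta_{p,k'}\delta_{q,m'} L_w^nL_u^{k'}L_v^{m'}$, which vanishes if $p<0$ or $q<0$. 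By linearity the same holds for finite linear combinations $\sum c_{n,k',m'}L_w^nL_u^{k'}L_v^{m'}$, which are weak-$\ast$ dense in $\cl T_L(\bb H^+)$. Since $\Phi_{p,q}$ is weak-$\ast$ continuous on bounded sets (this follows from the integral formula together with dominated convergence, noting that on bounded sets weak-$\ast$ convergence coincides with WOT convergence), part (a) follows.

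For (b), I first record two structural facts that I would verify by direct computation on the basis $w^n\otimes u^k\otimes v^m$: (i) $L_u^k$ and $L_v^m$ are isometries (so $(L_u^k)^*L_u^k = I$ and $(L_v^m)^*L_v^m = I$), and (ii) every operator in $\cl C$ (in particular every $L_w^n$, and more generally every $L_f$ with $f\in L^\infty(\bb T)$) commutes with $L_u$ and $L_v$. The commutation with $L_u$ is immediate since the coordinates are independent; for $L_v$, the twist $f\mapsto w^{-k}f$ is multiplication by a bounded function and commutes with multiplication by $f$. Using these facts and the computation from part (a), on a single generator I get
\[
(L_v^m)^*(L_u^k)^*\Phi_{k,m}(L_w^nL_u^{k'}L_v^{m'}) = \delta_{k,k'}\delta_{m,m'}(L_v^m)^*(L_u^k)^*L_w^nL_u^kL_v^m = \delta_{k,k'}\delta_{m,m'}L_w^n,
\]
which lies in $\cl C$. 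By linearity the same holds for trigonometric polynomials; by weak-$\ast$ continuity of $A\mapsto (L_v^m)^*(L_u^k)^*\Phi_{k,m}(A)$ and weak-$\ast$ closedness of $\cl C$, the conclusion $L_{k,m}\in\cl C$ follows for every $A\in\cl T_L(\bb H^+)$. Since $\cl C\cong L^\infty(\bb T)$, this produces the function $f_{k,m}(A)$ with $L_{k,m}=L_{f_{k,m}(A)}$.

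It then remains to reconstruct $\Phi_{k,m}(A)$ from $L_{k,m}$. The key observation is that by the definition $\Phi_{k,m}(A)=\sum Q_{k''+k,m''+m}AQ_{k'',m''}$ the range of $\Phi_{k,m}(A)$ is contained in $\bigoplus_{k'\geq k,\,m'\geq m}Q_{k',m'}\cl H$, which is precisely the range of the isometry $L_u^kL_v^m$. Consequently $L_u^kL_v^m(L_v^m)^*(L_u^k)^*$ acts as the identity on $\Phi_{k,m}(A)$, giving $L_u^kL_v^m\,L_{f_{k,m}(A)}=\Phi_{k,m}(A)$; commuting $L_{f_{k,m}(A)}\in\cl C$ past $L_u^kL_v^m$ yields the stated formula $\Phi_{k,m}(A)=L_{f_{k,m}(A)}L_u^kL_v^m$. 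The only place requiring real care is the weak-$\ast$ continuity passage, for which boundedness of the approximating net (equivalently, working inside norm balls) is the essential point.
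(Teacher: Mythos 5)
Your argument is correct and follows essentially the same route as the paper's proof: reduce to trigonometric polynomials using weak-$*$ continuity of $\Phi_{p,q}$, evaluate $\Phi_{p,q}$ on the generators via the integral formula of Proposition~\ref{tree}(1), and use that $L_u^kL_v^m$ is an isometry commuting with $\cl C$ to recover $L_{f_{k,m}}$ and then $\Phi_{k,m}(A)$. The only point to tighten is your justification of continuity: ``dominated convergence on bounded sets'' is unreliable for nets, whereas $\Phi_{p,q}$ is genuinely weak-$*$ continuous because it has a preadjoint, namely the Bochner integral $\omega\mapsto\frac{1}{4\pi^2}\int_0^{2\pi}\int_0^{2\pi}e^{-isp}e^{-itq}\,(W_{s,t}^*\omega W_{s,t})\,dt\,ds$ of a norm-continuous $\cl B(\cl H)_*$-valued function --- and this full continuity is what the paper (implicitly) invokes.
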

\begin{proof}
Since $\cl T_L(\bb H^+)$
is the weak-* closed hull of its trigonometric polynomials and
the map $\Phi_{k,m}$ is weak-* continuous, it suffices to assume that $A$ is of the form
$A = \sum_{(k,m)\in \Omega} L_{f_{k,m}}L_u^kL_v^m$,
where $\Omega\subseteq \bb{Z}_+\times\bb{Z}_+$
is finite. Now (a) is obvious. For (b), we have
\[
\Phi_{k,m}(A)=\frac{1}{4\pi^2}\int_0^{2\pi}\!\int_0^{2\pi} \rho_{s,t}(A)e^{-isk}e^{-itm}dtds
=L_{f_{k,m}}L_u^kL_v^m= L_u^kL_v^mL_{f_{k,m}},
\]
hence 
$ (L_v^m)^*(L_u^k)^*\Phi_{k,m}(A) = L_{f_{k,m}}$ which is in $\cl C.$
\end{proof}

We can now identify the diagonal and the centre of $\cl T_L(\bb H^+)$.

\begin{corollary}\label{diag}
The diagonal and the centre of $\cl T_L(\bb H^+)$ both coincide with $\cl C$.
\end{corollary}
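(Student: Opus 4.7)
The plan is to prove both equalities by Fourier decomposition with respect to the $\bb T^2$-action $\rho_{s,t}$, combining Proposition~\ref{tree} with Corollary~\ref{five}. The containment $\cl C\subseteq \cl T_L(\bb H^+)$ is immediate, and since $\cl C$ is generated by the unitaries $\{L_w^n:n\in\bb Z\}$ it is self-adjoint, hence contained in the diagonal; moreover, because $w$ is central in $\bb H$, $L_w$ commutes with $L_u$ and $L_v$ and hence with all of $\cl T_L(\bb H^+)$, so $\cl C$ also lies in the centre.

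For the reverse inclusion for the diagonal, suppose $A\in \cl T_L(\bb H^+)\cap \cl T_L(\bb H^+)^*$. A change of variable in the integral defining $\Phi_{p,q}$, together with the fact that $\rho_{s,t}$ is a $*$-automorphism, yields $\Phi_{p,q}(A^*)=\Phi_{-p,-q}(A)^*$. Applying Corollary~\ref{five}(a) to both $A$ and $A^*$ then forces $\Phi_{p,q}(A)=0$ unless $(p,q)=(0,0)$, and parts (2)--(3) of Proposition~\ref{tree} applied to $A-\Phi_{0,0}(A)$ give $A=\Phi_{0,0}(A)\in \cl C$ by Corollary~\ref{five}(b).

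For the centre, let $A$ commute with every element of $\cl T_L(\bb H^+)$. Since $\rho_{s,t}(L_u)=e^{is}L_u$, $\rho_{s,t}(L_v)=e^{it}L_v$ and $\rho_{s,t}(L_w)=L_w$, the commutation relations $AX=XA$ for $X\in\{L_u,L_v,L_w\}$ are preserved under $\rho_{s,t}$ (the unimodular scalars cancel), so each $\Phi_{k,m}(A)$ also lies in the centre. Writing $\Phi_{k,m}(A)=L_{f_{k,m}}L_u^kL_v^m$ by Corollary~\ref{five}(b), and using the operator-level Heisenberg relations $L_v^m L_u=L_w^{-m}L_uL_v^m$ and $L_v L_u^k=L_w^{-k}L_u^k L_v$, together with the fact that each multiplication operator $L_f$ commutes with both $L_u$ and $L_v$, commutation of $\Phi_{k,m}(A)$ with $L_u$ reduces, after cancelling the isometry $L_u^{k+1}L_v^m$, to $\zeta_{-m}f_{k,m}=f_{k,m}$ in $L^\infty(\bb T)$, forcing $f_{k,m}=0$ for $m>0$; symmetrically, commutation with $L_v$ forces $f_{k,0}=0$ for $k>0$. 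Hence $A=\Phi_{0,0}(A)\in \cl C$.

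The step that I expect will require the most care is verifying that $L_v$ really does commute with every multiplication operator $L_f$ despite acting as multiplication by $\zeta_{-k}$ on each component $Q_{k,m}(\cl H)$, and then threading the Heisenberg braiding correctly through $L_u^k L_v^m$ to isolate the scalar identity in $L^\infty(\bb T)$; everything else is formal averaging over the torus action.
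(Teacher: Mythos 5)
Your proposal is correct and follows essentially the same route as the paper: both inclusions are reduced to the Fourier components $\Phi_{p,q}$ via the torus action, the diagonal is handled through the identity $\Phi_{p,q}(A)^*=\Phi_{-p,-q}(A^*)$ combined with Corollary~\ref{five}(a), and the centre through the invariance of $\cl Z$ under $\rho_{s,t}$ and the commutation of $L_{f_{k,m}}L_u^kL_v^m$ with $L_u$ and $L_v$. Your explicit computation with the braiding relations $L_vL_u^k=L_w^{-k}L_u^kL_v$ merely spells out the step the paper dismisses as ``immediate,'' and it is carried out correctly.
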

\begin{proof}
The maps $\rho_{s,t}$ are automorphisms of  $\cl T_L(\bb H^+)$ and hence leave
its centre $\cl Z$ invariant. By Proposition \ref{tree} (1),
if $A\in \cl Z$ then $\Phi_{k,m}(A)\in\cl Z$. By Corollary \ref{five} (b),
$L_{f_{k,m}(A)}L_u^kL_v^m\in\cl Z$ for each $k,m\ge 0$.
It is now immediate that if such an operator
commutes with all $L_u$ and $L_v$ then
$L_{f_{k,m}(A)}=0$ unless $k=m=0$. Thus, $A=L_{f_{0,0}(A)}\in\cl C$.

It follows from Proposition \ref{tree} (1) that $\Phi_{k,m}(A)^*=\Phi_{-k,-m}(A^*)$.
Hence by Corollary \ref{five} (a), if $A$ and $A^*$ are both in $\cl T_L(\bb H^+)$,
then $\Phi_{k,m}(A)=0$ unless $k = m = 0$.
Thus, each $A_r$ is in $\cl C$ and hence so is $A$.

We have shown that the centre and the diagonal are contained in $\cl C$.
The opposite inclusions are obvious.
\end{proof}


In some of the results that follow we adapt techniques used by Davidson and Pitts in \cite{davpit}.
Along with the left regular representation $L$ of $\bb{H}^+$ defined above, we
consider the restriction of its right regular representation to
$\cl H = \ell^2(\bb H^+)$. This is generated by the operators
\begin{align*}
R_u(w^n\otimes u^k\otimes v^m) = &w^{n-m}\otimes u^{k+1}\otimes v^m \\
R_v(w^n\otimes u^k\otimes v^m) = &w^{n}\otimes u^k\otimes v^{m+1}\\
R_w(w^n\otimes u^k\otimes v^m) = &w^{n+1}\otimes u^{k}\otimes v^m,
\qquad (n,k,m)\in \bb Z\times\bb Z_+ \times\bb Z_+.
\end{align*}
We denote by $\cl T_R(\bb H^+)$ the weak-* closed subalgebra of $\cl B(\ell^2(\bb H^+))$ generated by
$$\{ R_w^n,R_u^k, R_v^m: (n,k,m)\in \bb Z\times\bb Z_+ \times\bb Z_+\}.$$
It is trivial to verify that $\cl T_L(\bb H^+)$ and $\cl T_R(\bb H^+)$ commute.

\begin{lemma}\label{cyc}
Suppose that the operator $A \in\bh$ commutes with $\cl T_R(\bb H^+)$
and that $A(w^{0}\ot u^0\ot v^0)=0$. Then $A=0$.
\end{lemma}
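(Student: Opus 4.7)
The plan is to show that $w^0\otimes u^0\otimes v^0$ is cyclic for $\cl T_R(\bb H^+)$; once we have that, the result is an immediate consequence of the commutation hypothesis.

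First I would compute the orbit of the distinguished vector $\xi_0 := w^0\otimes u^0\otimes v^0$ (corresponding to the identity $e\in\bb H^+$) under the monoid generated by $R_u,R_v,R_w$. Using the three formulas in the definition of the right regular representation and applying them in the order $R_v^m$, then $R_u^k$, then $R_w^n$ to $\xi_0$, one gets
\[
R_v^m\xi_0 = w^0\otimes u^0\otimes v^m,\qquad
R_u^k R_v^m \xi_0 = w^{-km}\otimes u^k\otimes v^m,
\]
and finally
\[
R_w^{N+km}R_u^kR_v^m\xi_0 = w^{N}\otimes u^k\otimes v^m,
\qquad N\in\bb Z,\ k,m\in\bb Z_+.
\]
Since $N$ ranges over all of $\bb Z$ (because we allow arbitrary integer exponents of $R_w$), the orbit of $\xi_0$ under $\{R_w^n R_u^k R_v^m : n\in\bb Z,\ k,m\in\bb Z_+\}$ is exactly the canonical orthonormal basis of $\cl H$. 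Its linear span is therefore norm-dense in $\cl H$.

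Now the rest is formal. Given $T\in \cl T_R(\bb H^+)$ and using $AT = TA$ together with $A\xi_0 = 0$, we have
\[
A(T\xi_0) = T(A\xi_0) = 0.
\]
Applying this to each of the operators $T = R_w^nR_u^kR_v^m$ from above shows $A$ vanishes on every vector of the canonical basis, hence on their dense linear span, and therefore $A=0$ by continuity.

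There is no real obstacle here; the only thing to verify carefully is the cyclicity computation, specifically that one can hit every integer exponent of $w$ by compensating the $w^{-km}$ that $R_u^k$ produces with an appropriate power of $R_w$, which is possible precisely because $n$ is allowed to be any integer in the generating set of $\cl T_R(\bb H^+)$.
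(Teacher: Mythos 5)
Your proof is correct and is essentially the paper's argument: both establish that $w^0\ot u^0\ot v^0$ is cyclic for $\cl T_R(\bb H^+)$ by hitting every canonical basis vector with a product of the generators, and then conclude via the commutation relation. The only cosmetic difference is the order of composition — the paper applies $R_w^n$ first, then $R_u^k$, then $R_v^m$, so that $R_u$ acts on a vector with $v^0$ and no compensating power of $R_w$ is needed, whereas your ordering requires the (correctly handled) correction $R_w^{N+km}$.
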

\begin{proof} For each $(n,k,m)\in \bb{Z}\times\bb{Z}_+\times\bb{Z}_+$ we have
\begin{align*}
A(w^n\ot u^k\ot v^m) &= AR_{v^m}R_{u^k}R_{w^n}(w^{0}\ot u^0\ot v^0)\\
&= R_{v^m}R_{u^k}R_{w^n}A(w^{0}\ot u^0\ot v^0)=0.
\end{align*}
Hence, $A = 0$.
\end{proof}

The argument below is standard; for the case of the unilateral shift, see \cite[Prop. V.1.1]{davcstar}.
We include a proof for the convenience of the reader.

\begin{proposition}\label{essen}
If $A \in \bh$ commutes with $R_u$ or $R_v$, then  $\|A\|$ equals the essential norm
$\|A\|_{e}\equiv\inf\{\nor{A+K}:K \;\text{compact}\}$.
In particular, the algebra $\cl T_L(\bb H^+)$ does not contain nonzero compact operators.
\end{proposition}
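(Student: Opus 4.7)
The plan is to adapt the classical argument for the unilateral shift cited in \cite[Prop. V.1.1]{davcstar}. Two properties of $R_u$ and $R_v$ drive the proof: each is an isometry on $\cl H$, and $R_u^{*j}\to 0$, $R_v^{*j}\to 0$ in the strong operator topology as $j\to\infty$. The isometry property is immediate because each of these operators sends the canonical orthonormal basis $\{w^n\otimes u^k\otimes v^m\}$ injectively into itself. For the SOT-convergence of the adjoint powers, a direct computation from the defining formulas gives
\[
R_u^{j}(w^n\otimes u^k\otimes v^m)=w^{n-jm}\otimes u^{k+j}\otimes v^m, \qquad
R_v^{j}(w^n\otimes u^k\otimes v^m)=w^n\otimes u^k\otimes v^{m+j},
\]
so $R_u^{*j}$ and $R_v^{*j}$ annihilate a prescribed basis vector once $j$ is sufficiently large.

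Assuming now that $A$ commutes with $R_u$ (the case of $R_v$ is identical), the relation $R_u^{*j}R_u^{j}=I$ together with commutation yields $R_u^{*j}AR_u^{j}=A$, so that for any compact $K\in\bh$,
\[
\|A+K\|\ge \|R_u^{*j}(A+K)R_u^{j}\|\ge \|A\|-\|R_u^{*j}KR_u^{j}\|.
\]
Thus it suffices to show $\|R_u^{*j}KR_u^{j}\|\to 0$ as $j\to\infty$. This is the only technical point; it rests on the standard fact that if $T_j\to 0$ in SOT then $T_jK\to 0$ in norm for every compact $K$, which follows because $K$ sends the unit ball of $\cl H$ into a relatively compact set on which SOT-convergence is uniform. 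Applied to $T_j=R_u^{*j}$ and the compact operator $K^*$, this gives $R_u^{*j}K^*\to 0$ in norm, and taking adjoints yields $KR_u^{j}\to 0$ in norm; left-multiplication by $R_u^{*j}$, which has norm at most $1$, preserves this. Letting $j\to\infty$ in the displayed inequality produces $\|A\|\le\|A+K\|$ for every compact $K$, so $\|A\|\le\|A\|_e$; the reverse inequality is automatic.

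For the ``in particular'' assertion, every $A\in\cl T_L(\bb H^+)$ commutes with $R_u$ because $\cl T_L(\bb H^+)$ and $\cl T_R(\bb H^+)$ commute, so the first part applies. If such an $A$ is compact, taking $K=-A$ gives $\|A\|=\|A\|_e\le\|A-A\|=0$, whence $A=0$. No genuine obstacle appears here: the argument is a direct translation of the shift proof, and the only non-formal ingredient is the standard SOT-to-norm convergence lemma for compact operators.
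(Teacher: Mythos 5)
Your proof is correct and follows essentially the same route as the paper's: both are direct adaptations of the classical essential-norm argument for isometries whose adjoint powers vanish asymptotically, resting on the fact that $R_u$, $R_v$ are isometries commuting with $A$ and that compactness turns the relevant weak/strong nullity into norm convergence. The only cosmetic difference is that the paper argues by contradiction using the WOT-nullity of $(R_v^n)$ applied to a single near-norming vector, whereas you use the compression identity $R_u^{*j}AR_u^{j}=A$ together with the SOT-nullity of $(R_u^{*j})$ to get the inequality $\|A\|\le\|A+K\|$ directly; both versions are standard and equally valid.
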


\begin{proof} Assume that $A$ commutes with $R_v$ (the other case is similar).
It is easy to see that $(R_v^n)_n$ tends to $0$ weakly. Indeed if $x,y$ are in $\cl H$ and we write
$x=\sum_mx_m\ot v^m, \,  y=\sum_my_m\ot v^m$ where $x_m,y_m$ are in $L^2(\bb T)\ot \ell^2(\bb Z_+)$, then
\[
 \sca{R_v^nx,y}= \sum_m\sca{x_m,y_{m+n}}\to 0
\]
since $(\nor{x_m})$ and  $(\nor{y_m})$ are square integrable.

Suppose, by way of contradiction, that there is a  compact operator $K\in\bh$ such that $\|A+K\|<\|A\|$.
Then there is a unit vector
$x\in\cl H$ which satisfies  $\|Ax\|> \|A+K\| $.
But $\|(A+K)R_v^nx\|\leq \|A+K\|$ since $R_v^n$ is an isometry.
On the other hand, since $R_v^n$ tends to $0$ weakly we have $\lim_n\|KR_v^nx\|=0$. Thus
$$\lim_n \|(A+K)R_v^nx\|=\lim_n \| A R_v^nx\|=\lim_n \|R_v^nAx\|=\|Ax\|,$$
a contradiction.
\end{proof}

\begin{theorem}\label{quasi}
The algebra $\cl T_L(\bb H^+)$ does not contain quasinilpotent operators. In particular,
$\cl T_L(\bb H^+)$ is semi-simple.
\end{theorem}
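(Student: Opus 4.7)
The plan is to show every Fourier coefficient $f_{k,m}(A)\in L^\infty(\bb T)$ from Corollary~\ref{five} of a quasinilpotent $A\in\cl T_L(\bb H^+)$ vanishes; by Proposition~\ref{tree}(3) this forces $A=0$, and semisimplicity is then automatic. The main tool is a product formula for these coefficients: inserting the relation $L_v^m L_u^p = L_w^{-mp}L_u^p L_v^m$ (from $L_uL_v=L_wL_vL_u$) into the product of two Fourier monomials and using that $\cl C$ is central yields, for $A,B\in\cl T_L(\bb H^+)$,
\[
f_{K,M}(AB)=\sum_{\substack{k+p=K\\ m+q=M}} f_{k,m}(A)\,f_{p,q}(B)\,\bar\zeta_1^{mp}.
\]
Iterating, $f_{K,M}(A^n)$ is a finite sum over ``paths'' $(k_1,m_1),\ldots,(k_n,m_n)\in\bb Z_+^2$ summing to $(K,M)$, each contributing $\prod_i f_{k_i,m_i}(A)$ times a unimodular element $\bar\zeta_1^{\pi(\vec k,\vec m)}\in L^\infty(\bb T)$.

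Next I would run a leading-term argument. Assume for contradiction that $A\neq 0$, so $S:=\{(k,m)\in\bb Z_+^2: f_{k,m}(A)\neq 0\}$ is nonempty. Choose $\alpha_1,\alpha_2>0$ with $\alpha_1/\alpha_2$ irrational; the functional $\ell(k,m):=\alpha_1 k+\alpha_2 m$ is then injective on $\bb Z_+^2$ and coercive, so it attains a unique minimum $\mu^*$ on $S$ at some $(k^*,m^*)$. Any path contributing nontrivially to $f_{nk^*,nm^*}(A^n)$ lies in $S^n$, so each step has $\ell$-weight $\ge\mu^*$, while the total equals $\ell(nk^*,nm^*)=n\mu^*$. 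Equality forces every step to have weight exactly $\mu^*$ and hence, by uniqueness, to equal $(k^*,m^*)$. Thus only the constant path survives and
\[
f_{nk^*,nm^*}(A^n)=f_{k^*,m^*}(A)^n\,\bar\zeta_1^{\pi_n},\qquad \|f_{nk^*,nm^*}(A^n)\|_\infty=\|f_{k^*,m^*}(A)\|_\infty^n.
\]

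Combining with the Fourier-coefficient estimate $\|f_{p,q}(X)\|_\infty=\|\Phi_{p,q}(X)\|\le\|X\|$ -- which follows because $L_u^pL_v^q$ is an isometry commuting with $L_{f_{p,q}(X)}\in\cl C$, while Proposition~\ref{tree}(1) presents $\Phi_{p,q}$ as a contractive average of the isometric automorphisms $\rho_{s,t}$ -- applied to $X=A^n$, one obtains
\[
\|f_{k^*,m^*}(A)\|_\infty\le\|A^n\|^{1/n}\xrightarrow[n\to\infty]{} r(A)=0,
\]
contradicting $(k^*,m^*)\in S$. The main obstacle I expect is isolating the leading term: because $\bb Z_+^2$ is only partially ordered, there is no obvious ``leading coefficient'' as in the scalar polynomial setting, and the whole argument hinges on producing a direction $\alpha$ in which every element of $S$ has a distinct height, so that the unique minimum survives as the only contribution to $f_{nk^*,nm^*}(A^n)$.
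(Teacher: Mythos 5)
Your proposal is correct and follows essentially the same strategy as the paper's proof: isolate a ``leading'' Fourier coefficient $f_{k^*,m^*}(A)$ of a nonzero $A$, show that it is the unique contribution to the coefficient of $A^n$ at $(nk^*,nm^*)$ so that $\|A^n\|\ge\|f_{k^*,m^*}(A)\|_\infty^n$, and conclude that the spectral radius is nonzero. The only differences are cosmetic: the paper selects the extreme point of $E$ by minimizing $k+m$ and then $k$ (rather than a generic linear functional $\alpha_1k+\alpha_2m$ with irrational slope), and it extracts the surviving term from matrix coefficients of the norm-convergent Abel means $A_r^n$ rather than from a convolution formula for $\Phi_{K,M}(AB)$ --- which, for the record, is rigorously justified here because Corollary \ref{five}(a) makes the convolution sum finite.
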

\proof
Let $A \in\cl T_L(\bb H^+)$ be nonzero and define $f_{k,m}=f_{k,m}(A)\in L^\infty(\bb T)$
as in Corollary \ref{five}. 
Recall that for $r\in (0,1)$ we have set
\[
A_r=\sum_{k,m\ge 0}r^{k+m}L_{f_{k,m}}L_u^kL_v^m.
\]
Let \begin{align*}
     E= & \{(k,m): f_{k,m}\neq 0 \}, \qquad \rho=\inf \{k+m: (k,m) \in E\}\\
k_0= &\inf \{k: (k,m)\in E, k+m=\rho\}, \quad m_0=\rho-k_0.
    \end{align*}
If $g,h\in L^2(\bb T)$ and $n\in \bb Z_+$, we have
\begin{align*}
 & \sca{A_r^n(g\ot u^0\ot v^0), ( h\ot u^{nk_0}\ot v^{nm_0})} =\\
&\sum_\gamma
r^{\sum k_i}r^{\sum m_i}\!
\sca{(f_{k_1,m_1}\dots f_{k_n, m_n})\phi_\gamma g)\ot u^{\sum k_i}\ot v^{\sum m_i}\!, (h\ot u^{nk_0}\ot v^{nm_0})}
\end{align*}
where the summation is over all $\gamma = ((k_1,m_1),(k_2,m_2),...,(k_n, m_n))$ with $(k_i,m_i)\in E$
and $\phi_\gamma$ is a function of modulus $1$ such that
$L_u^{k_1}L_v^{m_1}\dots L_u^{k_n}L_v^{m_n}$ $=$ $L_{\phi_{\gamma}} L_u^{\sum k_i}L_v^{\sum m_i} $.
For a term in the above sum to be nonzero, we must have $\sum k_i=nk_0$ and $\sum m_i=nm_0$.
Thus, since
$k_i + m_i \ge\rho= k_0 + m_0$ for each $i$ and
 $\sum (k_i+m_i)=n(k_0+m_0)$ we obtain  $k_i + m_i=k_0 + m_0$ for all $i=1,\dots, n$.
But $k_i\geq k_0$ for all $i$, hence the condition $\sum k_i = nk_0$ gives
$k_i=k_0$ for all $i$ and so $m_i=m_0$ for all $i$.

Hence  there is only one nonzero term in the above sum and we obtain
\begin{align*}
\sca{A_r^n(g\ot u^0\ot v^0), ( h\ot u^{nk_0}\ot v^{nm_0})}
= r^{n(k_0+m_0)}\sca{(f_{k_0,m_0}^n\phi_{\gamma_0} g), h},
\end{align*}
where $\gamma_0 = ((k_0,m_0),(k_0,m_0),\dots,(k_0, m_0))$ (and the term $(k_0,m_0)$
appears $n$ times).
Now since $\nor{A_r}\le \nor{A}$ for each $r$ and $A_r\to A$ in the weak-* topology,
\begin{align*}
& \left|\sca{A^n(g\ot u^0\ot v^0), ( h\ot u^{nk_0}\ot v^{nm_0})}\right|  \\
&=\lim_{r\nearrow 1}\left|\sca{A_r^n(g\ot u^0\ot v^0), ( h\ot u^{nk_0}\ot v^{nm_0})}\right| \\
&= \lim_{r\nearrow
1}r^{n(k_0+m_0)}|\sca{(f_{k_0,m_0}^n\phi_{\gamma_0} g), h}|
=|\sca{(f_{k_0,m_0}^n \phi_{\gamma_0} g), h}| .
 \end{align*}
Since $\phi_{\gamma_0}$ is unimodular,
\begin{align*}
\nor{A^n} &\ge
\sup\left\{\left|\sca{A^n(g\ot u^0\ot v^0), h\ot u^{nk_0}\ot v^{nm_0}}\right|:\nor{g}_2\le 1, \nor{h}_2\le 1\right\} \\
& = \sup\{|\sca{f_{k_0,m_0}^n\phi_{\gamma_0} g, h}|:\nor{g}_2\le 1,
\nor{h}_2\le 1\}= \nor{f_{k_0,m_0}^n}_\infty.
 \end{align*}
Thus,
\[
\|A^{n}\|^{1/n}\geq \|f_{k_{0},m_{0}}\|_\infty
\]
for all $n$, and hence the spectral radius of $A$ is non-zero.
\endproof

\begin{theorem}\label{comm}
The commutant of  $\cl T_R(\bb H^+)$ is  $\cl T_L(\bb H^+)$.
\end{theorem}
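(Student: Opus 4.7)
The inclusion $\cl T_L(\bb H^+)\subseteq \cl T_R(\bb H^+)'$ is the already-noted commutation of the left and right regular representations. For the reverse, I will fix $A\in\cl T_R(\bb H^+)'$, analyse its Fourier coefficients $\Phi_{k,m}(A)$ via the automorphism group $\{\rho_{s,t}\}$, show each $\Phi_{k,m}(A)$ either vanishes or is of the form $L_fL_u^kL_v^m$ with $f\in L^{\infty}(\bb T)$, and recover $A$ as the weak-$*$ limit of its Abel sums $A_r$ from Proposition \ref{tree}(2). Throughout, Lemma \ref{cyc} serves as the indispensable separating-vector tool, since the vector $\xi_0=w^0\otimes u^0\otimes v^0$ is cyclic for $\cl T_R(\bb H^+)$ and therefore separating for its commutant.

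A routine computation (analogous to the one recorded for the $L_u,L_v,L_w$) gives $\rho_{s,t}(R_u)=e^{is}R_u$, $\rho_{s,t}(R_v)=e^{it}R_v$, $\rho_{s,t}(R_w)=R_w$, so $\rho_{s,t}$ preserves $\cl T_R(\bb H^+)$ and, being spatial, its commutant as well; hence by Proposition \ref{tree}(1), $\Phi_{k,m}(A)\in\cl T_R(\bb H^+)'$ for every $(k,m)\in\bb Z\times\bb Z$. Applying the defining sum of $\Phi_{k,m}$ to $\xi_0$, only the term $(k',m')=(0,0)$ survives, giving $\Phi_{k,m}(A)\xi_0 = Q_{k,m}A\xi_0$. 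If $k<0$ or $m<0$ this vanishes because $Q_{k,m}=0$ on $\cl H=\ell^2(\bb H^+)$, and Lemma \ref{cyc} forces $\Phi_{k,m}(A)=0$.

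For $k,m\geq 0$, set $B=\Phi_{k,m}(A)$; then $B\xi_0\in Q_{k,m}\cl H$, so $B\xi_0=f\otimes u^k\otimes v^m$ for some $f\in L^2(\bb T)$. Since $L_w=R_w$, the operator $B$ commutes with $\cl C$, and expanding $g\otimes u^0\otimes v^0=\sum g_n R_w^n\xi_0$ for a trigonometric polynomial $g=\sum g_n\zeta_n$ yields $B(g\otimes u^0\otimes v^0)=gf\otimes u^k\otimes v^m$; boundedness of $B$ and a standard truncation argument using the sets $\{|f|>\lambda\}$ then upgrade $f$ to $L^{\infty}(\bb T)$. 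The operator $L_fL_u^kL_v^m$ lies in $\cl T_L(\bb H^+)$, agrees with $B$ at $\xi_0$ (direct check), and commutes with $\cl T_R(\bb H^+)$; applying Lemma \ref{cyc} to the difference gives $\Phi_{k,m}(A)=L_fL_u^kL_v^m\in\cl T_L(\bb H^+)$. This is the main obstacle: promoting the a priori merely $L^2$ symbol $f$ to an $L^{\infty}$ one and then pinning $B$ down from its value at the single vector $\xi_0$.

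Once this step is done, the conclusion is immediate. By Proposition \ref{tree}(2), the series $A_r=\sum_{k,m\geq 0}r^{k+m}\Phi_{k,m}(A)$ converges absolutely in norm for each $r\in(0,1)$, so $A_r\in\cl T_L(\bb H^+)$; and $A_r\to A$ in the weak-$*$ topology as $r\nearrow 1$. Since $\cl T_L(\bb H^+)$ is weak-$*$ closed, $A\in\cl T_L(\bb H^+)$, completing the proof.
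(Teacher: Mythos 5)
Your proposal is correct and follows essentially the same route as the paper: analyse $A$ through its Fourier coefficients $\Phi_{k,m}(A)$, upgrade the $L^2$ symbols to $L^\infty$ by commuting $A$ with the masa $\cl C$ and testing against characteristic functions, identify the coefficients with $L_fL_u^kL_v^m$ via Lemma \ref{cyc}, and conclude by Abel summation and weak-$*$ closedness. The only (harmless) organisational differences are that you place each $\Phi_{k,m}(A)$ in the commutant via the covariance $\rho_{s,t}(R_u)=e^{is}R_u$, $\rho_{s,t}(R_v)=e^{it}R_v$ rather than the paper's direct computation with the intertwining relations $R_uQ_{i,j}=Q_{i+1,j}R_u$, and that you identify the coefficients one at a time instead of comparing $A_r$ with an explicitly constructed $B_r$ in a single application of Lemma \ref{cyc}.
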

\proof
Let $A$ be in the commutant of $\cl T_R(\bb H^+)$. Then 
$$ A(w^0\ot u^0\ot v^0)=\sum_{k,m\ge 0}\phi_{k,m}\ot u^k\ot v^m  $$
for some $\phi_{k,m}\in L^2(\bb T)$.

We show that $\phi_{k,m} \in L^{\infty}(\mathbb{T})$.
Let $g\in L^\infty(\bb T)$. Since $L_gA = AL_g$ (note that $L_g\in\cl Z\subseteq\cl T_R(\bb H^+))$, we have
\begin{align*}
A(g\ot u^0\ot v^0) &=L_gA(w^0\ot u^0\ot v^0)=\sum_{k,m\ge 0}L_g(\phi_{k,m}\ot u^k\ot v^m)\\
&= \sum_{k,m\ge 0}(g\phi_{k,m}\ot u^k\ot v^m)
\end{align*}
and so
$$\sca{A(g\ot u^0\ot v^0), (g\ot u^k\ot v^m)}=\sca{g\phi_{k,m},g}
=\frac{1}{2\pi}\int_0^{2\pi} \phi_{k,m}(t)|g(t)|^{2}dt.$$
Therefore
$$\left|\frac{1}{2\pi}\int_0^{2\pi}\phi_{k,m}(t)|g(t)|^{2}dt\right| \leq \|A\|\|g\|_2^2.$$
Using this inequality for characteristic functions in the place of $g$,
one sees that $\phi_{k,m}$ induces a linear functional on $L^1(\bb{T})$
of norm not larger than $\nor{A}$; thus, $\phi_{k,m}\in L^{\infty}(\bb{T})$.

We show that if $r\in (0,1)$ the operator
$A_r=\sum_{k,m\in \bb Z} \Phi_{k,m}(A)  r^{|k|+|m|}$ defined in Proposition \ref{tree}
is in the commutant of $\cl T_R(\bb H^+)$.
It suffices to show that $\Phi_{k,m}(A)=\sum_{i,j} Q_{k+i,m+j}AQ_{i,j}$
is in the commutant of $\cl T_R(\bb H^+)$
for all $k,m\in\bb Z$. We have
$R_{u}Q_{k,m}=Q_{k+1,m}R_{u}$
and hence
\begin{align*}
\sum_{i,j}  Q_{k+i,m+j}AQ_{i,j}R_{u}& =
\sum_{i,j} Q_{k+i,m+j}AR_{u}Q_{i-1,j} = \sum_{i,j} Q_{k+i,m+j}R_{u}AQ_{i-1,j}\\
& = R_{u}\sum_{i,j} Q_{k-1+i,m+j}AQ_{i-1,j}.
\end{align*}
Similarly,
$R_{v}Q_{k,m}=Q_{k,m+1}R_{v}$
and hence
\begin{align*}
\sum_{i,j}  Q_{k+i,m+j}AQ_{i,j}R_{v}& =
\sum_{i,j} Q_{k+i,m+j}AR_{v}Q_{i,j-1} = \sum_{i,j} Q_{k+i,m+j}R_{v}AQ_{i,j-1}\\
& =  R_{v}\sum_{i,j}Q_{k+i,m-1+j}AQ_{i,j-1}.
\end{align*}
\noindent Now set $B_r=\sum_{k,m\ge 0} r^{k+m}L_{\phi_{k,m}}L_{u^{k}}L_{v^{m}}$. Since $\phi_{k,m}\in L^\infty(\bb T)$,
the series converges absolutely to an operator in $\cl T_L(\bb H^+)$.

Clearly, $\Phi_{k,m}(A)(w^0\otimes u^0\otimes v^0) = \phi_{k,m}\otimes u^k\otimes v^m$
and so $A_r(w^0\ot u^0\ot v^0) = B_{r}(w^0\ot u^0\ot v^0)$.
Since both $A_r$ and $B_r$ are in the commutant of $\cl T_R(\bb H^+)$, Lemma \ref{cyc}
implies that $A_r=B_r$. Hence $A_r\in \cl T_L(\bb H^+)$. Since
$\cl T_L(\bb H^+)$ is weak-* closed,
Proposition \ref{tree} (2) implies that $A \in \cl T_L(\bb H^+)$.
\endproof

The following properties of $\cl T_L(\bb H^+)$ follow from Theorem \ref{comm}.

\begin{corollary} (a) The algebra $\cl T_L(\bb H^+)$ has the bicommutant property \\
$\cl T_L(\bb H^+)''=\cl T_L(\bb H^+)$.

(b) $\cl T_L(\bb H^+)$ is an inverse closed algebra.

(c) $\cl T_L(\bb H^+)$ is closed in the weak operator topology.
\end{corollary}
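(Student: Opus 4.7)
The plan is that all three statements follow formally and without difficulty from Theorem \ref{comm}, which identifies $\cl T_L(\bb H^+)$ as the commutant $\cl T_R(\bb H^+)'$.

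For part (a), I would begin from the observation recorded just before Lemma \ref{cyc}, namely that $\cl T_L(\bb H^+)$ and $\cl T_R(\bb H^+)$ commute; this says $\cl T_R(\bb H^+)\subseteq \cl T_L(\bb H^+)'$. Taking commutants (which is inclusion reversing) and then invoking Theorem \ref{comm}, I get
\[
\cl T_L(\bb H^+)''\subseteq \cl T_R(\bb H^+)'=\cl T_L(\bb H^+).
\]
The reverse inclusion $\cl T_L(\bb H^+)\subseteq \cl T_L(\bb H^+)''$ is automatic, giving (a).

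For part (c), I would simply note that the commutant of any subset of $\cl B(\cl H)$ is closed in the weak operator topology (intersection of WOT-closed sets of the form $\{A:\sca{(AB-BA)x,y}=0\}$). Since Theorem \ref{comm} presents $\cl T_L(\bb H^+)$ as such a commutant, the WOT-closedness follows at once.

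For part (b), suppose $A\in \cl T_L(\bb H^+)$ is invertible in $\cl B(\cl H)$. By (a), $A$ commutes with every $B\in \cl T_R(\bb H^+)$; the routine calculation $A^{-1}B=A^{-1}BAA^{-1}=A^{-1}ABA^{-1}=BA^{-1}$ then shows $A^{-1}$ also commutes with every such $B$, so $A^{-1}\in \cl T_R(\bb H^+)'=\cl T_L(\bb H^+)$ by Theorem \ref{comm}. Since the real work has been done in proving Theorem \ref{comm}, I do not anticipate any obstacle in this corollary; it is a purely formal unfolding of the identification of the left algebra with the commutant of the right algebra.
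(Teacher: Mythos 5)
Your proposal is correct and is exactly the argument the paper intends: the corollary is stated as an immediate consequence of Theorem \ref{comm}, and your unpacking (commutants are WOT-closed, the bicommutant inclusion via $\cl T_R(\bb H^+)\subseteq \cl T_L(\bb H^+)'$, and inverse-closedness of commutants) is the standard way to do it. The only cosmetic slip is in (b), where the fact that $A$ commutes with $\cl T_R(\bb H^+)$ follows directly from Theorem \ref{comm} (i.e.\ from $A\in\cl T_R(\bb H^+)'$) rather than from part (a).
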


\section{Reflexivity of $\cl T_L(\bb{H}^+)$}\label{s_rllr}

In this section we establish the reflexivity of the algebra $\cl T_L(\bb{H}^+)$.
Let $F : L^2(\bb{T})\otimes L^2(\bb{T})\otimes L^2(\bb{T})\rightarrow
\ell^2(\bb{Z})\otimes \ell^2(\bb{Z})\otimes \ell^2(\bb{Z})$
be the tensor product of three copies of the Fourier transform.
Let $\cl K = H^2(\bb T)\otimes H^2(\bb T)$ and
$\widetilde{\cl H}= L^2(\bb{T})\otimes \cl K = L^2(\bb{T},\cl K)$;
we have that $\widetilde{\cl H} = F^{-1}(\ell^2(\bb{H}^+))$.
We will use the same symbol for the restriction of $F$ to $\widetilde{\cl H}$.

Let $\widetilde{W} = F^{-1}L_w F$, $\widetilde{U} = F^{-1}L_{u} F$, $\widetilde{V} = F^{-1}L_{v} F$
(acting on $\widetilde{\cl H})$ and
$\cl L=F^{-1}\cl T_L(\bb{H}^+)F$.
For a fixed $\xi\in \bb{T}$, let
$V_{\xi} = A_{\xi}\otimes S\in \cl B(H^2\otimes H^2)$, where $S=T_{\zeta_1}$ is the shift on $H^2$
and $A_{\xi}$ is given by
$(A_{\xi}f)(z) = f\left(\frac{z}{\xi}\right)$, $f\in H^2$.

Write $\mu$ for normalised Lebesgue measure on $\bb{T}$.
We consider the Hilbert space $\widetilde{\cl H}$ as a direct integral over the
measure space $(\bb{T},\mu)$ of the constant field
$\xi\rightarrow \cl K(\xi)=\cl K$ of Hilbert spaces.
Thus, an operator $T$ is decomposable \cite{afg} with respect to this field
if and only if it belongs to $\cl M\otimes\cl B(\cl K)$,
where $\cl M$ denotes the multiplication masa of $L^{\infty}(\bb{T})$;
we write $T=\int_\bb{T} T(\xi)d\mu(\xi)$.
We note that $\widetilde{W}$, $\widetilde{U}$ and $\widetilde{V}$
are decomposable.
In the next proposition we identify their direct integrals.

\begin{proposition}\label{diring}
When $\widetilde{\cl H}$ is identified with the direct integral over $(\bb{T},\mu)$ of the constant field
$\xi\rightarrow \cl K$
of Hilbert spaces, we have $\widetilde{W} = \int_{\bb{T}}\xi (I\otimes I)d\mu(\xi),$
$\widetilde{U} = \int_{\bb{T}} (S\otimes I) d\mu(\xi)$
and
$\widetilde{V} = \int_{\bb{T}} V_{\xi}d\mu(\xi)$.
\end{proposition}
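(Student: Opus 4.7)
The plan is to compute each of $\widetilde W, \widetilde U, \widetilde V$ directly on the total orthonormal set $\{z^n\otimes z^k\otimes z^m : n\in\bb Z,\ k,m\in\bb Z_+\}\subseteq\widetilde{\cl H}$, and then recognize the resulting formulas under the standard identification $L^2(\bb T)\otimes\cl K\cong L^2(\bb T,\cl K)=\int_\bb{T}^\oplus\cl K\,d\mu(\xi)$ that sends $f\otimes x$ to the function $\xi\mapsto f(\xi)x$. Since the three operators are defined as conjugates $F^{-1}L_?F$ and the Fourier transform identifies $z^n$ with $w^n$, $z^k$ with $u^k$ and $z^m$ with $v^m$, the action formulas for $L_w,L_u,L_v$ stated at the beginning of Section~4 translate immediately to
\[
\widetilde W(z^n\otimes z^k\otimes z^m)=z^{n+1}\otimes z^k\otimes z^m,\qquad \widetilde U(z^n\otimes z^k\otimes z^m)=z^n\otimes z^{k+1}\otimes z^m,
\]
\[
\widetilde V(z^n\otimes z^k\otimes z^m)=z^{n-k}\otimes z^k\otimes z^{m+1}.
\]

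From these formulas the first two identifications are essentially automatic. Under the direct integral identification, the vector $z^n\otimes z^k\otimes z^m$ corresponds to the function $h(\xi)=\xi^n(z^k\otimes z^m)$. Then $\widetilde W$ sends this to $\xi\mapsto \xi^{n+1}(z^k\otimes z^m)=\xi\cdot h(\xi)$, which is exactly pointwise multiplication by the scalar function $\xi\mapsto\xi$, yielding $\widetilde W=\int_\bb{T}\xi(I\otimes I)\,d\mu(\xi)$. Similarly, $\widetilde U$ sends $h$ to the function $\xi\mapsto\xi^n(z^{k+1}\otimes z^m)=(S\otimes I)h(\xi)$, a fibrewise constant action, giving $\widetilde U=\int_\bb{T}(S\otimes I)\,d\mu(\xi)$.

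The main (and only nontrivial) point is the formula for $\widetilde V$, where the twist $n\mapsto n-k$ couples the first and second factors. The key observation is that, since $A_\xi z^k=(z/\xi)^k=\xi^{-k}z^k$, the operator $V_\xi=A_\xi\otimes S$ acts on $h(\xi)=\xi^n(z^k\otimes z^m)$ as
\[
V_\xi h(\xi)=\xi^n(A_\xi z^k\otimes Sz^m)=\xi^n\xi^{-k}(z^k\otimes z^{m+1})=\xi^{n-k}(z^k\otimes z^{m+1}),
\]
which matches the function corresponding to $\widetilde V(z^n\otimes z^k\otimes z^m)=z^{n-k}\otimes z^k\otimes z^{m+1}$. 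Thus $\widetilde V$ and $\int_\bb{T}V_\xi\,d\mu(\xi)$ agree on a total set, hence everywhere on $\widetilde{\cl H}$.

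It remains to verify that $\xi\mapsto V_\xi$ is a bounded measurable field of operators, so that the direct integral actually makes sense. This is immediate: each $V_\xi$ is a unitary since $|\xi|=1$ makes $A_\xi$ unitary on $H^2$ and $S$ is an isometry; moreover $\xi\mapsto V_\xi$ is even norm-continuous, since on each basis element $z^k\otimes z^m$ the matrix coefficients depend continuously (in fact polynomially) on $\xi$ and $\xi^{-1}$. I therefore expect no real obstacles; the substantive content of the proposition is the identification of the twist $L_v:w^n\otimes u^k\otimes v^m\mapsto w^{n-k}\otimes u^k\otimes v^{m+1}$ with the rotation $A_\xi$ on the $H^2$ factor carrying the exponent $k$.
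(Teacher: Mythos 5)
Your proof is correct and takes essentially the same approach as the paper: both verify the three formulas on the monomials $\xi^n z_1^k z_2^m$, the only substantive point being that $A_\xi z^k=\xi^{-k}z^k$ reproduces the twist $n\mapsto n-k$ in the action of $L_v$. One trivial slip: $V_\xi=A_\xi\otimes S$ is an isometry rather than a unitary (since $S$ is not surjective on $H^2$), but your argument only needs the field $\xi\mapsto V_\xi$ to be bounded and measurable, which it is.
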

\proof
We identify the elements of $\widetilde{\cl H} = L^2(\bb{T}, \cl K)$
with functions on three
variables, $f = f(\xi,z_1,z_2)$, such that for a.e.
$\xi\in\bb{T}$, the function on two variables
$f(\xi,\cdot,\cdot)$ is analytic.
To show that
$\widetilde{W} = \int_{\bb{T}} \xi (I\otimes I) d\mu(\xi)$, note that if $f\in
\widetilde{\cl H}$ then $\widetilde{W}f(\xi,z_1,z_2) = \xi f(\xi,z_1,z_2)$,
$\xi,z_1,z_2\in \bb{T}$.

The claim concerning $\widetilde{U}$ is immediate from its
definition. For $\widetilde{V}$ we argue as follows: let
$f(\xi,z_1,z_2) = \xi^n z_1^k z_2^m$
(that is, $f =F^{-1}(w^n\ot u^k\ot v^m))$; then
\begin{align*}
\widetilde{V}f &=\widetilde{V}F^{-1}(w^n\ot u^k\ot v^m) = F^{-1}L_v(w^n\ot u^k\ot v^m) \\
&= F^{-1}(w^{n-k}\ot u^k\ot v^{m+1})
 \end{align*} and thus
$\widetilde{V}f(\xi,z_1,z_2) = \xi^{n - k}z_1^kz_2^{m+1}$.
On the other hand, the direct integral
$\int_{\bb{T}}(A_{\xi}\otimes I) d\mu(\xi)$ transforms the
function $f$ into the function $g(\xi,z_1,z_2) =
\xi^{n-k}z_1^k z_2^m$. We thus have that
$$\widetilde{V} =
(I\otimes S)\int_{\bb{T}}(A_{\xi}\otimes I) d\mu(\xi)
= \int_{\bb{T}} (A_{\xi}\otimes S) d\mu(\xi).$$
\endproof

For $\xi\in \bb{T}$, let $\cl L_{\xi}\subseteq \cl B(\cl K)$
be the weak-* closed subalgebra
generated by $S\ot I$ and $V_{\xi}$.
The operators $A_\xi,S\in \cl B(H^2)$ are easily seen to satisfy
the assumptions of Corollary \ref{hase} with $\lambda = \bar\xi$.
It follows that $\cl L_\xi$ is reflexive; in particular, it is weakly closed.
We note that the algebra $\cl L_{\xi}$ was studied by Hasegawa in \cite{ha}
where a class of invariant subspaces of $\cl L_{\xi}$ was exhibited.

In the next theorem, we use the notion of
a direct integral of non-selfadjoint operator algebras developed in \cite{afg}.

\begin{theorem}\label{drinfol}
The algebra $\cl T_L(\bb{H}^+)$ is reflexive.
\end{theorem}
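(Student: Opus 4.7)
The plan is to transfer the problem via the Fourier transform $F$ to $\widetilde{\cl H} = L^2(\bb T,\cl K)$, write $\cl L = F^{-1}\cl T_L(\bb H^+)F$ as a direct integral of reflexive fibre algebras, and then invoke the reflexivity theorem for direct integrals of non-selfadjoint algebras from \cite{afg}. Since each of the generators $\widetilde W$, $\widetilde U$, $\widetilde V$ of $\cl L$ is decomposable with respect to the constant field $\xi\mapsto\cl K$ (by Proposition \ref{diring}), the algebra $\cl L$ sits inside the decomposable operators, so every $T\in\cl L$ has a measurable field of fibres $T(\xi)$.

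The key structural point is that $\cl L$ contains the full multiplication masa $\cl M\otimes I_{\cl K}$. This is because $w^{-1}\in\bb H^+$ (only $k,m$ are restricted to $\bb Z^+$), so $L_w$ is unitary and lies in $\cl T_L(\bb H^+)$ together with $L_w^*$; the von Neumann algebra generated by the unitary $\widetilde W=\int_{\bb T}\xi I\,d\mu(\xi)$ is precisely $\cl M\otimes I_{\cl K}$ by the spectral theorem. Combined with $\widetilde U$ and $\widetilde V$, this shows that for a.e.\ $\xi$ the fibre $\cl L(\xi)$ contains both $S\otimes I$ and $V_\xi=A_\xi\otimes S$, and so $\cl L_\xi\subseteq\cl L(\xi)$. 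For the reverse inclusion, the weak-* closed algebra of decomposable operators with fibres in $\cl L_\xi$ contains $\widetilde W$, $\widetilde U$, $\widetilde V$ and is closed under multiplication by $\cl M\otimes I_{\cl K}$, hence contains $\cl L$; taking fibres gives $\cl L(\xi)\subseteq \cl L_\xi$ a.e. Thus $\cl L = \int^{\oplus}_{\bb T}\cl L_\xi\,d\mu(\xi)$.

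By Corollary \ref{hase}, applied to the pair $A_\xi,S\in\cl B(H^2)$ satisfying $A_\xi S=\bar\xi SA_\xi$ with $A_\xi$ invertible and the weak-* closure of the polynomials in $A_\xi$ reflexive (it is an abelian von Neumann algebra, since $A_\xi$ is unitary), the fibre algebra $\cl L_\xi$ is reflexive for every $\xi\in\bb T$. The direct-integral reflexivity theorem of Azoff--Fong--Gilfeather \cite{afg} then yields that $\cl L=\int^{\oplus}_{\bb T}\cl L_\xi\,d\mu(\xi)$ is reflexive, and consequently $\cl T_L(\bb H^+)=F\cl L F^{-1}$ is reflexive.

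The main obstacle I anticipate is verifying the identification $\cl L(\xi)=\cl L_\xi$ a.e.\ rigorously — in particular, checking measurability of the fibre field and confirming that the hypotheses of the \cite{afg} theorem (notably, that $\cl L$ is a module over the diagonal $\cl M\otimes I_{\cl K}$ with measurable fibre algebras whose reflexive hulls are again measurable) are in place. Once the direct-integral description of $\cl L$ is established, the reflexivity conclusion is a direct application of \cite{afg}.
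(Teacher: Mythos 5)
Your proposal follows essentially the same route as the paper: transfer by $F$ to $\widetilde{\cl H}=L^2(\bb T,\cl K)$, decompose $\cl L$ as a direct integral with fibres $\cl L_\xi$, get reflexivity of each fibre from Corollary \ref{hase}, and conclude by the Azoff--Fong--Gilfeather results. Two remarks. First, your invocation of Corollary \ref{hase} has the roles of $U$ and $V$ interchanged: after swapping the tensor factors, $\cl L_\xi$ is generated by $I\otimes S$ and $S\otimes A_\xi$, so one must take $U=S$ and $V=A_\xi$; the invertibility hypothesis is then satisfied by $A_\xi$ (as you say), but the space $\cl W_0$ whose reflexivity is required is the weak-* closure of the polynomials in $S$, i.e.\ $\cl T$, which is reflexive by Sarason's theorem --- not the von Neumann algebra generated by $A_\xi$. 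As written, your verification pairs the invertibility of $A_\xi$ with the reflexivity of the polynomials in $A_\xi$, which would correspond to the assignment $U=A_\xi$, $V=S$, and that assignment fails since $S$ is not invertible. The conclusion is unaffected once the roles are set correctly. Second, the obstacles you flag at the end are exactly what the paper supplies: weak closure of $\cl L$ (needed for the direct-integral machinery) comes from the commutant description in Theorem \ref{comm}, the containment $\cl M\otimes I_{\cl K}\subseteq\cl L$ gives that all diagonal operators lie in $\cl L$, and Propositions 3.2 and 3.3 of \cite{afg} then yield both the fibrewise membership criterion and the passage from reflexivity of almost all fibres to reflexivity of $\cl L$.
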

\proof
By definition, $\cl L= F^{-1} \cl T_L(\bb{H}^+)F$ is generated,
as a weak-* closed algebra, by 
the operators $\widetilde{U}$, $\widetilde{V}$, $\widetilde{W}$ and $\widetilde{W}^{-1}$.

Note that $\cl L\subseteq \cl M\otimes\cl B(\cl K)$; moreover,
$\cl L$ is  weakly closed, since $\cl T_L(\bb H^+)$ is a commutant (Theorem \ref{comm}).
Hence, by \cite{afg}, $\cl L$
gives rise to a direct integral $\int_{\bb{T}} \cl A(\xi) d\mu(\xi)$, where
$\cl A(\xi)$ is the weakly closed algebra generated by
$\widetilde{U}(\xi)$, $\widetilde{V}(\xi)$, $\widetilde{W}(\xi)$ and $\widetilde{W}^{-1}(\xi)$.
Since the operators $\widetilde{W}(\xi)$ and $\widetilde{W}^{-1}(\xi)$ are scalar multiples
of the identity, we have that $\cl A(\xi) = \cl L_\xi$.
 On the other hand, since $\cl M\otimes I_{\cl K}\subseteq \cl L$,
 all diagonal operators of the integral decomposition
are contained in $\cl L$. Proposition 3.3 of \cite{afg} shows that an operator $T=\int_\bb{T} T(\xi)d\mu(\xi)$
belongs to $\cl L$ if and only if
almost all $T(\xi)$ belong to $\cl L_\xi$.
As observed above, $\cl L_{\xi}$ is reflexive for each $\xi\in \bb{T}$.
Proposition 3.2 of \cite{afg} now implies that $\cl L$ is reflexive.
Therefore so is $\cl T_L(\bb{H}^+)$.
\endproof

\section{Other representations}

Until now, we were concerned with the left regular representation of
the Heisenberg semigroup. In this section, we consider another class
of representations defined as follows. Let $\gl=e^{2\pi i\theta}$
with $\theta$ irrational and $\ga : \bb{T}\rightarrow\bb{T}$ be the
rotation corresponding to $\theta$, that is, the map given by
$\ga(z)=\gl z$. We let $\nu$ be a  Borel probability measure on $\bb
T$ which is quasi-invariant (that is, $\nu(E) = 0$ implies
$\nu(\alpha(E)) = 0$, for every measurable set $E\subseteq \bb{T}$)
and ergodic (that is, $f\circ \alpha^k = f$ for all $k\in \bb{Z}$
implies that $f$ is constant, for every  $f\in L^{\infty}(\bb{T},\nu)$).
Let $\cl W_\pi(\bb H^+)$ be the
weak-* closed subalgebra of $\cl B(L^2(\bb T,\nu))$ generated by the
operators
\[
\pi(u) = M_{\zeta_1}, \qquad\pi(v)f=\sqrt{\frac{d\nu_{\gl}}{d\nu}}(f\circ\alpha) \quad\text{and }\;\pi(w)=\gl I
\]
where $M_{\zeta_1}$ is the operator of multiplication by the function $\zeta_1$ on $L^2(\bb{T},\nu)$
(recall that $\zeta_n(z)=z^n)$ and
$\nu_\lambda(A)$ is the Borel measure on $\bb{T}$ given by $\nu_\lambda(A)=\nu(\alpha(A))$.

We will need the following two lemmas;
the results are probably known in some form, but we have been unable to locate a precise reference and so
we include their proofs.
Below, the terms \emph{singular} and \emph{absolutely continuous}
are understood with respect to Lebesgue measure $\mu$.

\begin{lemma}\label{meas}
(i) The measure $\nu$ is either absolutely continuous or singular.

(ii) If $\nu$ is absolutely continuous it is equivalent to Lebesgue measure.

(iii) If $\nu$ is singular and not continuous, it is supported on an orbit of $\alpha$.
\end{lemma}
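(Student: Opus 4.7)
The plan is to handle the three parts in sequence, in each case combining the $\alpha$-invariance of Lebesgue measure $\mu$ with the quasi-invariance and ergodicity of $\nu$. Part (i) is the crux, and parts (ii) and (iii) follow by shorter direct arguments.

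For (i), I would start with the Lebesgue decomposition $\nu=\nu_a+\nu_s$ (with $\nu_a\ll\mu$, $\nu_s\perp\mu$) and pick a Borel set $A$ such that $\nu_a=\nu|_A$ and $\nu_s=\nu|_{A^c}$. Because $\mu$ is $\alpha$-invariant, push-forward by $\alpha$ preserves both absolute continuity with respect to $\mu$ and mutual singularity with $\mu$, so $\alpha_*\nu_a+\alpha_*\nu_s$ is again the Lebesgue decomposition of $\alpha_*\nu$. On the other hand, quasi-invariance gives $\alpha_*\nu = g\cdot\nu$ for some $g>0$ $\nu$-a.e., and splitting yields a second Lebesgue decomposition $g\nu_a+g\nu_s$; uniqueness forces $\alpha_*\nu_a=g\nu_a$. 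Since $g$ is $\nu$-a.e.\ positive, this equality of measures identifies the $\nu$-essential supports of $\nu_a$ and $\alpha_*\nu_a$, giving $\alpha(A)=A$ modulo $\nu$-null sets. Hence $\chi_A\in L^\infty(\bb T,\nu)$ is $\alpha$-invariant, and ergodicity forces it to be $\nu$-a.e.\ constant, so $\nu_a=0$ or $\nu_s=0$. The main obstacle is precisely this transfer-of-supports step: one must keep careful track of which equalities hold modulo $\mu$-null versus $\nu$-null sets when invoking the uniqueness of the Lebesgue decomposition.

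For (ii), assume $\nu\ll\mu$, let $h=d\nu/d\mu$, and put $E=\{h=0\}$. Then $\nu(E)=0$, so quasi-invariance gives $\nu(\alpha^k(E))=0$ for all $k\ge 0$, which forces $\alpha^k(E)\subseteq E$ modulo $\mu$-null sets (since $h>0$ on $E^c$); the $\alpha$-invariance of $\mu$ then upgrades this to $\alpha^k(E)=E$ mod $\mu$. Classical ergodicity of $\mu$ under the irrational rotation yields $\mu(E)\in\{0,1\}$, and $\mu(E)=1$ is excluded because it would force $\nu(\bb T)=\int h\,d\mu=0$; hence $\mu(E)=0$, proving $\mu\ll\nu$ and so $\nu\sim\mu$. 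For (iii), if $\nu$ has an atom $\xi$, the orbit $\mathcal{O}(\xi)=\{\alpha^k\xi:k\in\bb Z\}$ is a countable, Borel, $\alpha$-invariant set, so $\chi_{\mathcal{O}(\xi)}\in L^\infty(\bb T,\nu)$ is $\alpha$-invariant; ergodicity forces it to be $\nu$-a.e.\ constant, and since $\nu(\mathcal{O}(\xi))\ge\nu(\{\xi\})>0$ we conclude $\nu(\mathcal{O}(\xi)^c)=0$, i.e.\ $\nu$ is supported on the orbit of $\xi$.
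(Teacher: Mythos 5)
Your proof is correct. Parts (ii) and (iii) follow essentially the same route as the paper: for (ii) the paper also observes that $\{d\nu/d\mu=0\}$ is an invariant set and invokes ergodicity of Lebesgue measure (your version just spells out the details), and for (iii) the paper likewise saturates an atom into its orbit and applies ergodicity of $\nu$. Part (i) is where you genuinely diverge. The paper's argument is more elementary: it takes a Lebesgue-null Borel set $A$ carrying $\nu_s$, notes that the orbit saturation $\bigcup_{n\in\bb Z}\alpha^n(A)$ is an $\alpha$-invariant set which still has Lebesgue measure zero (a countable union of null sets, since $\mu$ is $\alpha$-invariant), hence has $\nu$-measure strictly between $0$ and $1$ when both $\nu_a$ and $\nu_s$ are nonzero, contradicting ergodicity of $\nu$. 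You instead push forward the Lebesgue decomposition, use that $\alpha_*$ preserves both $\ll\mu$ and $\perp\mu$, and invoke uniqueness of the decomposition together with $\alpha_*\nu=g\nu$, $g>0$ $\nu$-a.e., to show the carrier of $\nu_a$ is itself $\alpha$-invariant modulo $\nu$-null sets. Both are valid; the paper's version avoids the Radon--Nikodym derivative $g$ and the bookkeeping of null sets that you correctly flag as the delicate point, while your version is more systematic and does not rely on the countability of the acting group. One small remark: to get $\alpha(A)=A$ mod $\nu$-null you should use the identity $\alpha_*\nu_s=g\nu_s$ (also supplied by uniqueness) alongside $\alpha_*\nu_a=g\nu_a$, since the latter alone only yields one inclusion; this is implicit in your ``identifies the essential supports'' step but worth making explicit.
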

\proof (i) Denote by $\nu_{a}$ (resp. $\nu_{s}$) the absolutely
continuous (resp. singular) part of $\nu$. Suppose that $\nu_{s}\neq
0$ and $\nu_{a}\neq 0$ and let $A$ be a Borel set of Lebesgue
measure zero such that $\nu_{s}(\mathbb{T}\setminus A) = 0$. Then
$\cup_{n \in \mathbb{Z}} \alpha^{n}(A)$ is an invariant set of
positive $\nu$-measure. On the other hand, the Lebesgue measure of
$\cup_{n \in \mathbb{Z}} \alpha^{n}(A)$ is zero and hence $\cup_{n
\in \mathbb{Z}} \alpha^{n}(A)$ is not of full $\nu$-measure.
This contradicts the ergodicity of $\nu$.

\medskip

(ii) The set on which the Radon-Nikodym derivative $d\nu/d\mu$
vanishes is invariant by quasi-invariance of $\nu$; hence it is $\mu$-null by ergodicity of $\mu$.

\medskip

(iii) Let $z_0 \in \mathbb{T}$ be such that $\nu(\{z_0\}) \neq 0$. Then the orbit
$X= \{\ga^n(z_0): n \in \mathbb{Z}\}$ of $z_0$ is an invariant set of positive $\nu$-measure
and it follows from ergodicity that its complement is $\nu$-null.
\endproof

\begin{lemma}\label{p_fmr}
Let $\nu$ be a singular continuous measure. Then the weak-* closed hull of the linear span
of the set $\{M_{\zeta_n}: \  n=1,2...\}$ is equal to
$\{M_{f}: \ f \in L^{\infty}(\mathbb{T}, \nu)\}$.
\end{lemma}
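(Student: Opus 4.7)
The plan is to reduce the statement to classical Fourier analysis via a duality argument and then invoke the F.\ and M.\ Riesz theorem. First, $\{M_f:f\in L^\infty(\bb T,\nu)\}$ is a MASA in $\cl B(L^2(\bb T,\nu))$, hence a von Neumann algebra whose (unique) predual is canonically $L^1(\bb T,\nu)$; the weak-$*$ topology it inherits from $\cl B(L^2(\bb T,\nu))$ therefore agrees with the weak-$*$ topology from this predual (vector functionals $\omega_{x,y}$ restrict to the pairing $M_f\mapsto \int f\,x\bar y\,d\nu$, and every element of $L^1(\bb T,\nu)$ has the form $x\bar y$). Hence the claim reduces to showing that the linear span of $\{\zeta_n : n\ge 1\}$ is weak-$*$ dense in $L^\infty(\bb T,\nu)$.

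By the bipolar theorem this is equivalent to the vanishing of the annihilator, i.e.\ to the assertion that every $g\in L^1(\bb T,\nu)$ satisfying
\[
\int_{\bb T} z^n g(z)\,d\nu(z)=0,\qquad n=1,2,\dots,
\]
is zero. Given such $g$, form the finite complex Borel measure $\sigma := g\cdot\nu$ on $\bb T$. Its Fourier coefficients $\int z^n\,d\sigma$ vanish for every $n\ge 1$, so by the classical F.\ and M.\ Riesz theorem $\sigma$ is absolutely continuous with respect to normalised Lebesgue measure $\mu$. On the other hand $\sigma\ll\nu$, and by hypothesis combined with Lemma \ref{meas}(i) the measure $\nu$ is singular with respect to $\mu$; hence $\sigma\perp\mu$ as well. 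These two properties force $\sigma=0$, so $g=0$ in $L^1(\bb T,\nu)$, which completes the proof.

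The essential content of the argument is the invocation of F.\ and M.\ Riesz; the remainder is routine Hahn--Banach book-keeping, so no significant technical obstacle is expected. The continuity hypothesis on $\nu$ is not actually used in the argument proper---only singularity is---but it is essential to distinguish the present case from the one handled in Lemma \ref{meas}(iii). The situation should be contrasted with the case of Lebesgue measure itself, in which the analogous weak-$*$ closure would be merely $H^\infty(\bb T)$; the jump from $H^\infty$ to all of $L^\infty$ under the singularity hypothesis is precisely the phenomenon captured by F.\ and M.\ Riesz.
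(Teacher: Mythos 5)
Your proof is correct and follows essentially the same route as the paper's: both reduce the statement to showing that any $g\in L^1(\bb{T},\nu)$ annihilating $\{\zeta_n : n\ge 1\}$ must vanish, and both conclude via the F.\ and M.\ Riesz theorem that the measure $g\,d\nu$ is absolutely continuous, hence zero by singularity of $\nu$. You merely spell out the duality/predual book-keeping that the paper leaves implicit; your closing remark that only singularity (not continuity) of $\nu$ is used is accurate.
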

\proof Let $f \in L^{1}(\mathbb{T}, \nu)$ be such that $$\int
f\zeta_n d\nu=0$$ for all $n=1,2,\dots$. It follows from the F. and M.
Riesz Theorem that the measure $fd\nu$ is absolutely continuous. Since $\nu$ is singular,
we obtain that $f=0$ $\nu$ a.e., and hence it is equal to $0$ as an element of  $L^{1}(\mathbb{T}, \nu)$.
\endproof

The next theorem completely describes the operator algebras arising
from the class of representations that we consider.

\begin{theorem}\label{refl}
Let $\cl N = \{\zeta_k H^2 : k\in\bb{Z}\}$.
\begin{enumerate}
\item
If $\nu$ is equivalent to Lebesgue measure, then the algebra
$\cl W_\pi(\bb H^+)$ is unitarily equivalent to the nest algebra
$\Alg\cl N$.
\item
If $\nu$ is singular and not continuous, then
$\cl W_\pi(\bb H^+)$ is again unitarily equivalent to $\Alg\cl N$.
\item
If $\nu$ is singular and continuous, then
$\cl W_\pi(\bb H^+) = \cl B(L^2(\bb T, \nu))$.
\end{enumerate}
\end{theorem}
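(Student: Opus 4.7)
By Lemma \ref{meas}, $\nu$ is of exactly one of three types, which I handle separately.

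\textbf{Cases (1) and (2).} My plan is to exhibit, in each case, a unitary identification of $\cl W_\pi(\bb H^+)$ with an algebra on $L^2(\bb T,\mu)$ generated by the shift $M_{\zeta_1}$ together with a diagonal unitary $D$ (in the basis $\{\zeta_n\}$) whose eigenvalues $\{\mu_n:n\in\bb Z\}$ are pairwise distinct and dense in $\bb T$. In case (1), where $\nu=\mu$, the Radon--Nikodym factor $d\nu_\lambda/d\nu$ is identically $1$, so $\pi(v)f(z)=f(\lambda z)$ already acts as such a diagonal unitary (with $\mu_n=\lambda^n$). In case (2), Lemma \ref{meas}(iii) puts $\nu$ on a single orbit $\{\lambda^n z_0:n\in\bb Z\}$; identifying $L^2(\bb T,\nu)$ with $\ell^2(\bb Z)$ via the orbit enumeration and then with $L^2(\bb T,\mu)$ by the unitary $e_n\mapsto\zeta_{-n}$, a direct computation converts $\pi(v)$ into $M_{\zeta_1}$ and $\pi(u)$ into a diagonal unitary of the desired form (with eigenvalues $z_0\bar\lambda^n$).

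In this common model, the inclusion $\cl W_\pi(\bb H^+)\subseteq\Alg\cl N$ is immediate because $M_{\zeta_1}$ sends $\zeta_kH^2$ into $\zeta_{k+1}H^2$ and $D$ is diagonal. For the reverse inclusion I would first show that the weak-* closed algebra generated by $D$ alone already contains every diagonal operator in the $\{\zeta_n\}$ basis. Since the $\mu_n$ are pairwise distinct and dense in $\bb T$, the Ces\`aro kernels $p_N(z)=\tfrac{1}{N+1}\sum_{k=0}^N(\bar\mu_n z)^k$ satisfy $\|p_N\|_\infty\le 1$ and $p_N(\mu_m)\to\delta_{n,m}$ for every $m$, so $p_N(D)$ is norm bounded and converges in the weak operator topology (hence, by boundedness, in the weak-* topology) to the rank-one projection onto $\bb C\zeta_n$. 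These rank-one projections are weak-* total in the diagonal algebra, which therefore lies in $\cl W_\pi(\bb H^+)$. Now the rotation action $R_\eta\zeta_n=\eta^n\zeta_n$ satisfies $R_\eta M_{\zeta_1}R_\eta^*=\eta M_{\zeta_1}$ and $R_\eta DR_\eta^*=D$, so it leaves both $\cl W_\pi(\bb H^+)$ and $\Alg\cl N$ invariant; repeating the Fourier/Poisson analysis of Proposition \ref{tree} in this setting decomposes any $A\in\Alg\cl N$ into components $A_k=M_{\zeta_k}D_k$ ($k\ge 0$, $D_k$ diagonal), each lying in $\cl W_\pi(\bb H^+)$ by the previous step, and Ces\`aro sums recover $A$ weak-*.

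\textbf{Case (3).} Lemma \ref{p_fmr} applied to $\pi(u)=M_{\zeta_1}$ shows that the masa $\cl M=\{M_f:f\in L^\infty(\bb T,\nu)\}$ is already contained in $\cl W_\pi(\bb H^+)$; in particular $\pi(u)^*=M_{\bar\zeta_1}\in\cl W_\pi(\bb H^+)$. My plan is to prove that $\pi(v)^{-1}\in\cl W_\pi(\bb H^+)$ as well, so that $\cl W_\pi(\bb H^+)$ becomes a self-adjoint weak-* closed unital algebra containing $\cl M$ and $\pi(v),\pi(v)^*$. Its commutant then lies inside the masa $\cl M$, and commutation with $\pi(v)$ restricts it to the $\alpha$-invariant multiplication operators, which by the ergodicity of $\alpha$ reduce to the scalars. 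Thus $\cl W_\pi(\bb H^+)'=\bb CI$, and the double commutant theorem (applicable since $\cl W_\pi(\bb H^+)$ is now self-adjoint) gives $\cl W_\pi(\bb H^+)=\cl B(L^2(\bb T,\nu))$. The principal technical obstacle is producing $\pi(v)^{-1}$ from positive powers of $\pi(v)$: I would pick a sequence $n_j\to\infty$ with $\lambda^{n_j}\to\lambda^{-1}$, extract a weak-* cluster point $X\in\cl W_\pi(\bb H^+)$ of the bounded sequence $\pi(v)^{n_j}$ of unitaries, and use the covariance $\pi(v)^nM_f=M_{f\circ\alpha^{-n}}\pi(v)^n$ together with the singular-continuous nature of $\nu$ to identify $X$, after division by a suitable element of $\cl M\subseteq\cl W_\pi(\bb H^+)$, with $\pi(v)^{-1}$; making this identification rigorous (where the continuity of $\nu$ enters essentially via the behaviour of the Radon--Nikodym cocycle) is the technical heart of case (3).
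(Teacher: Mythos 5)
Your cases (1) and (2) follow the paper's route: both reduce to a bilateral shift together with a diagonal unitary whose eigenvalues are distinct and dense in $\bb T$, show that this unitary weak-* generates the full diagonal masa, and conclude that the algebra is everything lower triangular. Your Ces\`aro-kernel device $p_N(D)\to P_{\bb C\zeta_n}$ is a correct (and somewhat more hands-on) substitute for the paper's argument, which instead uses weak-* continuity of $\sigma\mapsto D_{(\sigma^n)_n}$ and density of $\{\lambda^k\}$ to see that the generated algebra is selfadjoint and then applies the bicommutant theorem; the endpoint is the same.

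Case (3) contains a genuine gap. You correctly place the multiplication masa inside $\cl W_\pi(\bb H^+)$ via Lemma \ref{p_fmr}, but your plan then hinges on producing $\pi(v)^{-1}$ in the algebra as a masa multiple of a weak-* cluster point of $\pi(v)^{n_j}$ with $\lambda^{n_j}\to\lambda^{-1}$. This step is unjustified and is likely unworkable: for Lebesgue measure one has $\pi(v)^{n_j}\to\pi(v)^{-1}$ strongly because rotation acts strongly continuously on $L^2(\bb T,\mu)$, but for singular $\nu$ the rotate of $\nu$ by a generic angle is mutually singular with $\nu$, so there is no continuity of the rotation action to exploit, and a weak-* cluster point of the unitaries $\pi(v)^{n_j}$ may well be $0$ (nothing rules out the Koopman-type unitary $\pi(v)$ having continuous spectrum for a type III quasi-invariant ergodic measure). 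You would also need the correcting function to be invertible in $L^\infty(\bb T,\nu)$, which you do not address. The paper sidesteps all of this: since the algebra contains the masa, any invariant subspace is the range of a projection $M_{\chi_E}$; invariance under $\pi(v)$ forces $E$ to be essentially $\alpha$-invariant, hence trivial by ergodicity; and Arveson's theorem (a weakly closed transitive algebra containing a masa is all of $\cl B(\cl H)$) finishes the proof. You should replace your construction of $\pi(v)^{-1}$ by this transitivity argument, since proving $\pi(v)^{-1}\in\cl W_\pi(\bb H^+)$ directly appears to be essentially as hard as the theorem itself.
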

\proof (1) Since $\nu$ is equivalent to Lebesgue measure, we may
assume that $\cl W_\pi(\bb H^+)$ acts on $L^2(\bb T)$,
$\pi(u)=M_{\zeta_1}$ and $\pi(v)f=f\circ\ga$.

If $a = (a_n)_{n\in\bb{Z}}\in
l^{\infty}(\bb{Z})$, let $D_a$ be given by $(\widehat{D_a f})(n)=
a_n \widehat{f}(n)$; thus $D_a$ is the image, under conjugation by
the Fourier transform, of the diagonal operator on $l^2(\bb{Z})$
given by $(x_j)\to (a_jx_j)$. Let $\cl D = \{D_a : a\in
\ell^{\infty}(\bb{Z})\}$; clearly, $\cl D$ is a masa on
$L^2(\mathbb{T})$. Since the map $\gs \to D_{(\gs^n)_n}$
is weak-* continuous from $\bb T$ into $\cl B(L^2(\bb{T}))$ and
$\{\gl^k :k\in \bb Z_+\}$ is dense in $\bb T$, the weak-* closed linear
span of
$\{D_{(\lambda^{kn})_n}:k\in \bb Z_+\} = \{\pi(v)^k :k\in\bb{Z}_+\}$
contains $\{ D_{(\gs^n)_n}: \gs\in\bb T\}$, hence
is a selfadjoint algebra and by the Bicommutant Theorem equals
$\cl D$. On the other hand, if $a\in \ell^\infty(\bb Z)$ and $p\geq 0$, the matrix
of $\pi(u)^pD_a$ with respect to the basis
$\{\zeta_k\}_{k\in\bb{Z}}$ has the sequence $a$ at the $p$-th
diagonal and zeros elsewhere. It follows that all lower triangular
matrix units belong to the algebra $\cl W_\pi(\bb H^+)$, and hence
it equals $\Alg\cl N$.

\medskip

(2) By lemma \ref{meas} (iii), $\nu$ is supported on the orbit
of a point $z_0\in \bb{T}$.
For $k\in\bb Z$, write $z_k=\ga^{-k}(z_0)$ and $\beta_k^2=\nu(\{z_k\})$. Since
$\nu_\gl(\{z_k\})= \nu(\{\ga(z_k)\})=\nu(\{z_{k-1}\})$ we have
$\beta_{k-1}=\beta(z_k)\beta_k$ where $\beta$ is the function determined by the
identity $\beta^2 = \dfrac{d\nu_\gl}{d\nu}$. If
$f_k=\dfrac{\chi_{\{z_k\}}}{\beta_k}$, then $\{f_k:k\in\bb Z\}$ is an
orthonormal basis of $L^2(\bb T, \nu)$ and we have
$\pi(v)\chi_{\{z_k\}}=\beta\cdot(\chi_{\{z_k\}}\circ\ga)=\beta\chi_{\{z_{k+1}\}}$.
Thus,
\[
\pi(v)f_k=\beta\frac{\chi_{\{z_{k+1}\}}}{\beta_k}=  \frac{\beta_k}{\beta_{k+1}}\frac{\chi_{\{z_{k+1}\}}}{\beta_k}=f_{k+1},
\]
and so $\pi(v)$ is the bilateral shift with respect to $\{f_k\}$.
Also $\pi(u)f_k=z_kf_k=\bar\gl^kz_0f_k$ for each $k$ and hence, as in the proof of (1),
the linear span of the positive powers of $\pi(u)$ is weak-* dense
in the set of all operators diagonalized by $\{f_k\}$. It follows as in (1) that
$\cl W_\pi(\bb H^+)$ consists of all operators which are lower
triangular with respect to $\{f_k\}$, hence it is unitarily equivalent to   $\Alg\cl N$.

\medskip

(3) By Lemma \ref{p_fmr}, the algebra $\cl W_\pi(\bb H^+)$
contains a maximal abelian selfadjoint algebra, namely, the
multiplication masa of $L^{\infty}(\bb{T},\nu)$.
Since $\ga$ acts ergodically, it is standard
that $\cl W_\pi(\bb H^+)$ has no nontrivial invariant subspaces. It follows
from \cite{arv100} that it is weak-* dense in, and hence equal to, $\cl B(L^2(\bb{T},\nu))$.
\endproof

\begin{remark}
Note the different roles of $\pi(u)$ and $\pi(v)$ in  (1) and (2):
in (1), the diagonal masa is generated by (the nonnegative powers of) $\pi(v)$;
in (2) the masa is generated by $\pi(u)$. These two representations
generate  inequivalent representations of the irrational rotation algebra,
as the corresponding measures are not equivalent (see \cite{br}).
\end{remark}

\bigskip

\noindent\textbf{A non-reflexive representation } We now construct
an example of a representation of $\bb{H}^+$ which generates a
non-reflexive weakly closed operator algebra. This representation,
$\rho$, acts on $H^2$ and is defined as follows:
If $S = T_{\zeta_1}$ is the shift and $V\in \cl B(H^2)$ is the operator given by
$(Vf)(z)=f(\gl z) = (f\circ\alpha)(z)$, we define
\[
\rho(u) = S, \qquad\rho(v) = SV \quad\text{and }\;\rho(w) = \gl I
\]
with $\gl=e^{2\pi i \theta}$ and $\theta$ irrational.
Let $\cl A$ be the weakly closed algebra generated by $\rho(u)$ and
$\rho(v)$. Using Fourier transform, we identify $H^2$ with
$\ell^2(\bb{N})$ and let $E : \cl B(H^2)\to \cl D\simeq
\ell^\infty(\bb N)$ be the usual normal conditional expectation onto
the diagonal given by $E((a_{ij})) = (b_{ij})$ where
$b_{ij}=a_{ij}\gd_{ij}$. Define $E_k$ for $k\ge 0$ by  $E_k(A) =
E((S^{*})^{k }A)$.

We recall that $[\cl S]$ denotes the linear span of a subset $\cl S$
of a vector space.

\begin{proposition}\label{thin}
If $A\in \cl A$ then $E_ m(A)\in
[I,V,\dots,V^ m]$.
\end{proposition}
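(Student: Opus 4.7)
The first step would be to record the algebraic structure: on the basis $\{\zeta_k\}_{k \ge 0}$ of $H^2$ the operator $V$ is diagonal, with $V\zeta_k = \gl^k \zeta_k$, and a direct calculation gives the commutation relation $VS = \gl SV$. Using this relation to push every $V$ to the right of every $S$, any word in the generators $\rho(u) = S$ and $\rho(v) = SV$ becomes a scalar multiple of $S^n V^b$ with $0 \le b \le n$, where $n$ is the total number of $S$'s in the word. Hence the unital polynomial algebra in $\rho(u), \rho(v)$ lies in the linear span of $\{S^n V^b : 0 \le b \le n\}$.

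Next I would compute $E_m$ on these monomials. For $n \ge m$ we have $(S^*)^m S^n V^b = S^{n-m} V^b$, while for $n < m$ we get $(S^*)^{m-n} V^b$. Since $V^b$ is diagonal, each of these products is a genuine one-sided shift when $n \ne m$ and thus has zero diagonal, while for $n = m$ it is simply $V^b$. Therefore $E_m(S^n V^b) = \gd_{n,m} V^b$, and by linearity $E_m(p) \in [I, V, \dots, V^m]$ for every polynomial $p$ in $\rho(u), \rho(v)$.

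The main obstacle is extending this from polynomials to a general $A \in \cl A$, since $E_m$ is not a priori WOT-continuous on all of $\cl B(H^2)$. The route I would take is to track individual matrix entries. Directly from the definitions, $E_m(A)$ is always diagonal and its $k$-th diagonal entry is $\sca{A\zeta_k, \zeta_{k+m}}$, a WOT-continuous functional of $A$. Choosing a net $(p_\ga)$ of polynomials with $p_\ga \to A$ WOT and writing $E_m(p_\ga) = \sum_{b=0}^m c_b^{(\ga)} V^b$, the $k$-th diagonal entry yields $\sum_{b=0}^m c_b^{(\ga)} \gl^{bk} \to \sca{A\zeta_k, \zeta_{k+m}}$ for each $k \ge 0$.

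To finish, I would invoke a Vandermonde argument: since $\theta$ is irrational, the scalars $1, \gl, \dots, \gl^m$ are pairwise distinct, so the matrix $(\gl^{bk})_{0 \le b,k \le m}$ is invertible. Restricting the convergent system to $k = 0, 1, \dots, m$ therefore forces each coefficient $c_b^{(\ga)}$ to converge to some $c_b \in \bb{C}$. Setting $q = \sum_{b=0}^m c_b V^b \in [I, V, \dots, V^m]$, the $k$-th diagonal entry of $q$ equals $\sca{A\zeta_k, \zeta_{k+m}}$ for every $k \ge 0$, and hence $E_m(A) = q \in [I, V, \dots, V^m]$, as required. The delicate point of the argument is precisely this final step: we bypass the lack of global WOT-continuity of $E_m$ by using the finite-dimensionality of the target space together with the irrationality of $\theta$.
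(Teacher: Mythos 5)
Your proof is correct and follows essentially the same route as the paper: reduce to (weak limits of) polynomials in $S$ and $SV$, use the commutation relation $VS=\lambda SV$ to write every word as a scalar multiple of $S^{k+n}V^{n}$, and observe that $E_m$ annihilates each such monomial unless $k+n=m$, in which case it returns $V^{n}$ with $n\le m$. The only point of divergence is your Vandermonde argument for passing to the limit, which carefully justifies a continuity step the paper handles implicitly (normality of $E$ plus the fact that the finite-dimensional target space $[I,V,\dots,V^{m}]$ is closed in every relevant topology); your version is sound and somewhat more self-contained, but it is not a different method.
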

\begin{proof}
The operator $A$ is
the weak limit of polynomials of the form \\ $\sum_{k,n\geq 0}
c_{k,n}S^{k+n}V^n$.
Thus, $E_m(A)$ is a weak limit of polynomials of the form
$$\sum c_{ k ,n} V^n$$
where the summation is over all $k,n\in\bb Z_+$ with $k +n = m$ and hence
$E_ m(A)\in [I,V,\dots,V^ m]$.
\end{proof}

\begin{proposition}\label{l_co}
If $\cl K\in\Lat\{S,SV\}$ then in fact $\cl K\in\Lat\{S,V\}$ and hence $\cl K=\zeta_kH^2$ for
some $k\in\bb Z_+$.
\end{proposition}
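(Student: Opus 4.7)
The plan is to invoke Beurling's theorem on $\cl K$, reduce the hypothesis to an inner-function divisibility, deduce $V$-invariance algebraically, and then classify the Beurling generator $\phi$ using the irrationality of $\theta$.

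Assuming $\cl K\neq\{0\}$, Beurling's theorem gives $\cl K=\phi H^2$ for some inner function $\phi$. Since $V$ is a multiplicative isometry of $H^2$---that is, $V(fg)=(Vf)(Vg)$---and $V\phi$ is itself inner, the condition $SV\cl K\subseteq\cl K$ is equivalent to $\zeta_1 V\phi\in\phi H^2$, i.e., to $\zeta_1 V\phi=\phi\psi$ for some inner $\psi$. Writing $\phi=\zeta_1^k\phi_0$ with $\phi_0(0)\neq 0$, substituting, cancelling $\zeta_1^k$, and evaluating at $z=0$ forces $\psi(0)=0$. Hence $\psi=\zeta_1\psi'$ for some inner $\psi'$, and $V\phi=\phi\psi'\in\phi H^2$; multiplicativity of $V$ then yields $V\cl K\subseteq\cl K$, proving $\cl K\in\Lat\{S,V\}$.

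To conclude $\cl K=\zeta_kH^2$, I would extract the structure of $\phi$ from the inner divisibility $\phi\mid V\phi$. Uniqueness of the Blaschke--singular--outer factorisation gives two conditions: the zero multiset of $\phi$ satisfies $\lambda Z(\phi)\subseteq Z(\phi)$, and the singular measure $\sigma$ of $\phi$ is dominated by the singular measure of $V\phi$, which is a $\lambda$-rotation of $\sigma$. A nonzero $a\in Z(\phi)$ would produce the infinite orbit $\{\lambda^n a : n\geq 0\}$ on the circle $|z|=|a|$, violating the Blaschke condition; hence the Blaschke factor of $\phi$ is $\zeta_1^k$ for some $k\geq 0$. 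Equality of total masses forces the measure inequality to become an equality, which makes $\sigma$ invariant under the irrational rotation $\alpha$; by unique ergodicity of $\alpha$ on $\bb T$, any finite $\alpha$-invariant positive Borel measure is proportional to Lebesgue measure, so $\sigma$ (being singular) must vanish. Therefore $\phi$ is a unimodular scalar times $\zeta_1^k$, and $\cl K=\zeta_kH^2$.

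The principal obstacle is this final classification step, where the irrationality of $\theta$ enters essentially through both the Blaschke condition (via infinitude of rotational orbits) and the unique ergodicity of $\alpha$ (to handle the singular measure). By contrast, the algebraic passage from $SV\phi\in\phi H^2$ to $V\phi\in\phi H^2$ is a short calculation at the origin after factoring out the appropriate power of $\zeta_1$.
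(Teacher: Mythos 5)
Your proof is correct. The first half --- Beurling's theorem, the identity $\zeta_1 V\phi=\phi\psi$, factoring out the zero of $\phi$ at the origin and evaluating at $z=0$ to force $\psi(0)=0$, hence $V\phi\in\phi H^2$ and $V\cl K\subseteq\cl K$ --- is essentially the same computation as in the paper. Where you genuinely diverge is the final step. The paper, having established $\cl K\in\Lat\{M_{\zeta_1},V\}$, simply views $\cl K$ inside $L^2(\bb T)$ and invokes its Theorem 6.3(1), which identifies the weak-* closed algebra generated by $M_{\zeta_1}$ and $V$ as the nest algebra $\Alg\cl N$; the lattice of that nest algebra is $\cl N\cup\{0,L^2\}$, so $\cl K=\zeta_kH^2$ with $k\ge 0$. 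You instead classify the inner function $\phi$ directly from the divisibility $\phi\mid V\phi$: the inclusion $\lambda Z(\phi)\subseteq Z(\phi)$ kills any nonzero Blaschke zero via the Blaschke condition on an infinite rotational orbit, and the equality of total masses plus unique ergodicity of the irrational rotation kills the singular measure, leaving $\phi=c\zeta_1^k$. Both arguments are sound; yours is self-contained hard-analysis (and makes explicit exactly where irrationality of $\theta$ enters, namely through infinitude of orbits and unique ergodicity), while the paper's is shorter because it reuses a structural result already proved for the representation-theoretic classification, whose proof in turn uses the density of $\{\lambda^k\}$ in $\bb T$ and the bicommutant theorem --- so the same irrationality input is present, just hidden one level down.
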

\begin{proof}

Since $S(\cl K)\subseteq \cl K$ and $\cl K\subseteq H^2$, by Beurling's Theorem there is an inner function
$\phi$ such that $\cl K=\phi H^2$.
Since $SV(\cl K)\subseteq \cl K$,
we have $SV(\phi)\in \cl K=\phi H^2$, so $\frac{z\phi(\gl z)}{\phi(z)}\in H^\infty$.
Thus,  there exists $h\in H^\infty$ such that
\begin{equation}\label{equ}
z\phi(\gl z)=h(z)\phi(z)\quad\text{for all }\; z\in\bb D.
\end{equation}
Let $\phi_{1}$ be an analytic function and $l$ a non-negative integer such that
$\phi_{1}(0)\neq 0$  and $\phi(z)=z^{l}\phi_{1}(z)$ for all $z \in \bb{D}$.
We obtain
\begin{equation}\label{equII}
z^{l+1}\lambda^{l}\phi_{1}(\lambda z)=h(z)z^{l}\phi_{1}(z)\quad\text{for all }\; z\in\bb D
\end{equation}
and  hence
\begin{equation}\label{equIII}
z\lambda^{l}\phi_{1}(\lambda z)=h(z)\phi_{1}(z)\quad\text{for all }\; z\in\bb D.
\end{equation}

Setting $z=0$ in (\ref{equIII}), we obtain that $h(0) = 0$. Thus,
there exists $h_1\in H^\infty$ such that $h(z) = z h_1(z)$. The relation
$z \phi(\gl z) = h(z)\phi(z) = z h_1(z)\phi(z)$ implies $\phi\circ\alpha=h_1\phi$
and hence $(\phi\circ\alpha) H^2 \subseteq  \phi H^2$. Therefore
$$V(\cl K)=V(\phi H^2)=(\phi\circ\alpha) H^2 \subseteq  \phi H^2=\cl K.$$
Considering $\cl K$ as a subspace of $L^2(\bb T)$, Theorem \ref{refl} (1) gives that $\cl K=\zeta_k H^2$ for some $k$
(note that here $\nu$ equals Lebesgue measure); since $\cl K\subseteq H^2$, $k$ must be nonnegative.
\end{proof}

\begin{theorem}
The algebra $\cl A$ is not reflexive; in fact
$\Ref\cl A = \Alg\cl N$ where $\cl N=\{\zeta_k H^2: k\in\bb Z_+\}$.

\end{theorem}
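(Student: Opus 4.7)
My plan is to identify the nontrivial invariant subspaces of $\cl A$, deduce $\Ref\cl A=\Alg\cl N$, and then exhibit an explicit operator in $\Alg\cl N\setminus\cl A$ via Proposition~\ref{thin}.

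First, each $\zeta_kH^2$ is invariant under both $S$ (since $S\zeta_kH^2=\zeta_{k+1}H^2\subseteq\zeta_kH^2$) and $SV$ (since $V\zeta_n=\lambda^n\zeta_n$ yields $V(\zeta_kH^2)=\zeta_kH^2$), while Proposition~\ref{l_co} ensures every nonzero closed invariant subspace of $\cl A$ is one of the $\zeta_kH^2$. Consequently, the nontrivial elements of $\Lat\cl A$ are exactly the members of $\cl N$.

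Second, since $\cl A$ is a unital algebra, for each nonzero $x\in H^2$ the cyclic subspace $\overline{\cl Ax}$ is an invariant subspace containing $x$. Among the members of $\cl N$ that contain $x$, namely $\{\zeta_jH^2:j\le k(x)\}$ where $k(x)$ denotes the order of vanishing of $x$ at $0$, the smallest by inclusion is $\zeta_{k(x)}H^2$. Thus $\overline{\cl Ax}=\zeta_{k(x)}H^2$. For any $T\in\Alg\cl N$ we then have $Tx\in T(\zeta_{k(x)}H^2)\subseteq\zeta_{k(x)}H^2=\overline{\cl Ax}$, which is exactly the condition $T\in\Ref\cl A$. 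The reverse inclusion $\Ref\cl A\subseteq\Alg\Lat\cl A=\Alg\cl N$ is immediate, giving $\Ref\cl A=\Alg\cl N$.

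Finally, let $P_0$ be the orthogonal projection onto $\bb C\zeta_0$. Since $P_0$ is diagonal in the basis $\{\zeta_n\}$, it preserves each $\zeta_kH^2$ and lies in $\Alg\cl N$. However, Proposition~\ref{thin} with $m=0$ forces $E(A)\in[I]=\bb CI$ for every $A\in\cl A$, whereas $E(P_0)=P_0\notin\bb CI$. Hence $P_0\in\Alg\cl N\setminus\cl A$, so $\cl A\subsetneq\Ref\cl A$ and $\cl A$ is not reflexive. The only nontrivial point is the identification $\overline{\cl Ax}=\zeta_{k(x)}H^2$, which relies on both the unital property of $\cl A$ and the total ordering of $\cl N$ under inclusion; everything else is an immediate application of Propositions~\ref{l_co} and~\ref{thin}.
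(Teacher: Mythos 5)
Your proof is correct and takes essentially the same route as the paper: Proposition~\ref{l_co} yields $\Ref\cl A=\Alg\Lat\cl A=\Alg\cl N$, and Proposition~\ref{thin} (your case $m=0$, forcing $E(A)\in\bb C I$) shows $\cl A\subsetneq\Alg\cl N$. The paper's proof consists of exactly these two citations; you have merely made explicit the standard cyclic-subspace identification $\overline{\cl A x}=\zeta_{k(x)}H^2$ and supplied the concrete witness $P_0\in\Alg\cl N\setminus\cl A$, both of which are correct.
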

\begin{proof}
By Proposition \ref{l_co}, $\Ref\cl A = \Alg\cl N$.
It follows from Proposition \ref{thin} that $\cl A$ is strictly contained in $\Ref\cl A$.
\end{proof}

\noindent{\bf Acknowlegment} We wish to thank Vassili Nestoridis for useful discussions.

\end{document}